\def\CC{\mathbb{C}} 
\def\DD{\mathbb{D}} 
\def\NN{\mathbb{N}} 
\def\CDD{\overline{\DD}} 
\def\OO{\mathcal{O}} 
\def\BBB{\mathcal{B}} 
\def\CL{\mathcal{C}} 
\def\INT{\mathrm{int}\,\,} 
\def\DIAM{\mathrm{diam}\,} 
\def\HULL#1#2{\widehat{#1}_{#2}} 
\def\HULLZ#1#2{\left(#1\right)\;\HULL{}{#2}} 
\def\HULLP#1{\HULL{#1}{}} 
\def\HULLZP#1{\HULLZ{#1}{}\,} 
\def\COMP#1#2{{#1}^{[#2]}} 
\def\UZW#1{\widetilde{#1}} 
\def\UZWI#1{\infty_{#1}} 
\def\HULLO#1{\HULL{#1}{\Omega}} 
\def\HULLZO#1{\HULLZ{#1}{\Omega}} 
\def\COMPPHI#1{\COMP{\varphi}{#1}} 
\def\COMPCPHI#1{\COMP{C_\varphi}{#1}} 
\def\UZWO{\UZW{\Omega}} 
\def\UZWIO{\UZWI{\Omega}} 
\newtheoremstyle{remarkstyle}{}{}{}{}{\bf}{.}{ }{}
\newtheorem{THE}{Theorem}[subsection]
\newtheorem{PROP}[THE]{Proposition}
\newtheorem{LEM}[THE]{Lemma}
\newtheorem{CRL}[THE]{Corollary}
\newtheorem{OBS}[THE]{Observation}
\theoremstyle{remarkstyle}
\newtheorem{RM}[THE]{Remark}
\newtheorem{EX}[THE]{Example}
\newtheorem{DEF}[THE]{Definition}
\newtheorem{PROBLEM}[THE]{Problem}
\begin{document}

\renewcommand{\thepage}{\small\arabic{page}}
\renewcommand{\thefootnote}{(\arabic{footnote})}
\renewcommand{\thesection}{\arabic{chapter}.\arabic{section}}
\renewcommand{\thesubsection}{\arabic{subsection}}

\renewcommand\Affilfont{\small}



\author{Sylwester Zaj\k{a}c}
\affil{Institute of Mathematics, Faculty of Mathematics and Computer Science,\\ Jagiellonian University, \L ojasiewicza 6, 30-348 Krak\'ow, Poland\\ sylwester.zajac@im.uj.edu.pl}

\title{Hypercyclicity of composition operators\\in Stein manifolds}

\date{}

\maketitle

\begin{abstract}
We characterise hypercyclic composition operators $C_\varphi:f\mapsto f\circ\varphi$ on the space of functions holomorphic on $\Omega$, where $\Omega$ is a connected Stein manifold and $\varphi$ is a holomorphic self-mapping of $\Omega$.

In the case when all balls with respect to the Carath\'{e}odory pseudodistance are relatively compact in $\Omega$, we show that much simpler characterisation is possible (many natural classes of domains in $\CC^N$ satisfy this condition).
Moreover, we show that in such a class of manifolds, and in simply connected and infinitely connected planar domains, hypercyclicity of $C_\varphi$ implies its hereditary hypercyclicity. \end{abstract}

\footnotetext[1]{{\em 2010 Mathematics Subject Classification.}
Primary: 47B33; Secondary: 32H50.

{\em Key words and phrases:} Spaces of holomorphic functions; composition operator; hypercyclic operator; holomorphic convexity; $\Omega$-convexity.

{\bf Acknowledgement:}
Main part of this paper was prepared as a semester paper under the guidance of P. Doma\'{n}ski in the framework of joint Ph. D.
program \'{S}SDNM (Poland) during the author research stay at Faculty of Mathematics and Computer Science, Adam Mickiewicz University, Pozna\'{n} (Poland).

During the subsequent development of the paper the author was supported by the NCN grant on the basis of the decision number DEC-2012/05/N/ST1/02911.
}




\subsection{Introduction}\label{sect_intro}

Let $\Omega$ be a connected $N$-dimensional Stein manifold (in particular, $\Omega$ can be a domain of holomorphy in $\CC^N$) and let $\varphi:\Omega\to\Omega$ be a holomorphic mapping.
We are interested in the problem of hypercyclicity and hereditary hypercyclicity of the composition operator $C_\varphi:f\mapsto f\circ\varphi$ on the space $\OO(\Omega)$ of holomorphic functions $f:\Omega\to\CC$, endowed with the usual topology of locally uniform convergence.

In the case when $\Omega$ is a domain in $\CC$, a characterisation of hypercyclicity was given by Grosse-Erdmann and Mortini in \cite{grosseerdmann_mortini} (actually, they described universal sequences $(C_{\varphi_n})_{n\in\NN}$, where $\varphi_n:\Omega\to\Omega$ are holomorphic maps).
In higher dimensions the problem was considered by several authors, mostly in cases when $\Omega$ is a polydisc, an euclidean ball or the whole $\CC^N$ with $\varphi$ being special (see \cite{bernalgonzalez} and the references in \cite{grosseerdmann_mortini}).
Analogous problem in spaces of real analytic functions was considered in \cite{domanskibonet} and in kernels of general (non-necessarily Cauchy-Riemann) partial differential equations in \cite{kalmesniess}.

From \cite[Theorem 3.21]{grosseerdmann_mortini} it follows (after applying a short reasoning; see Section \ref{sect_one_dim} in this paper) that if $\Omega$ is a simply connected or an infinitely connected domain in $\CC$, then for a holomorphic map $\varphi:\Omega\to\Omega$ the operator $C_\varphi$ is hypercyclic if and only if $\varphi$ is injective, $\varphi(\Omega)$ is a Runge domain with respect to $\Omega$ and the sequence $(\COMPPHI{n})_n$ is run-away, i.e. for every compact set $K\subset\Omega$ there is some $n$ such that $K\cap\COMPPHI{n}(K)=\varnothing$ ($\COMPPHI{n}$ is the $n$-th iterate of $\varphi$; see Section \ref{sect_preliminaries} for necessary definitions).
Moreover, the same theorem states that if $\Omega\subset\CC$ is finitely connected, then no $\varphi$ induces hypercyclic $C_\varphi$.
In Section \ref{sect_general} of this paper we characterise hypercyclic and hereditarily hypercyclic operators $C_\varphi$ for an arbitrary connected $N$-dimensional Stein manifold $\Omega$ and an arbitrary holomorphic mapping $\varphi:\Omega\to\Omega$ (Theorems \ref{th_char} and \ref{th_char_2}), using ideas developed by several authors, e.g. \cite{grosseerdmann_mortini}, \cite{bernalgonzalez}.
We prove that if $\Omega$ is a connected $N$-dimensional Stein manifold and $\varphi:\Omega\to\Omega$ is a holomorphic map, then $C_\varphi$ is hypercyclic if and only if $\varphi$ is injective, $\varphi(\Omega)$ is a Runge domain and for every compact holomorphically convex (shortly: $\Omega$-convex) set $K\subset\Omega$ there is $n$ for which the sets $K$ and $\COMPPHI{n}(K)$ are disjoint and their sum is $\Omega$-convex.
The language of $\Omega$-convexity seems to be natural for working with hypercyclicity of $C_\varphi$ of the space $\OO(\Omega)$, as it is strongly connected with some approximation theorems for that space (e.g. the Runge and Oka-Weil theorems).
One of the reasons, for which in the case of dimension one it was possible to characterise hypercyclicity in topological terms, is that in that situation the Runge theorem makes it possible to translate the notion of $\Omega$-convexity to some topological properties, what is generally not possible in higher dimensions.
Making use of the characterisation, we formulate an interesting criterion for $C_\varphi$ to be hereditarily hypercyclic (Theorem \ref{the_carat_do_jedynki}): assuming that $\varphi$ is injective and its image is a Runge domain with respect to $\Omega$ (these assumptions are in fact necessary), if $c_\Omega(z_0,\COMPPHI{n}(z_0))\to \infty$ for some point $z_0\in\Omega$, then the operator $C_\varphi$ is hereditarily hypercyclic ($c_\Omega$ denotes the Carath\'{e}odory pseudodistance on $\Omega$).

It is a disadvantage of the conditions in our characterisation that they require to answer the question whether a sum of two disjoint holomorphically convex sets is holomorphically convex, while the description for simply and infinitely connected domains in $\CC$ avoid this problem.
This fact motivated us to ask about higher-dimensional $\Omega$'s for which that simplier description works.
Not every $\Omega$ has this property; the conditions which are equivalent for the classes of planar domains mentioned above, are in general necessary but not sufficient (even in the punctured disc $\DD\setminus\lbrace 0\rbrace$).
In Section \ref{sect_simp_char} we proved the following fact: if all balls with respect to the Carath\'{e}odory pseudodistance are relatively compact in the topology of $\Omega$, then $\Omega$ admits the same characterisation of hypercyclicity as the planar domains mentioned above (Theorem \ref{th_carat_conv}).
Such a class of $\Omega$'s includes many 'nice' domains in $\CC^N$, e.g. bounded convex domains, strictly pseudoconvex domains, analytic polyhedra, etc.

It follows from Theorem \ref{th_char_2} that the operator $C_\varphi$ is hypercyclic if and only if it is hereditarily hypercyclic with respect to some sequence $(n_l)_l$ (Observation \ref{obs_hu_subseq}; it was also noticed in \cite{grosseerdmann_mortini} for planar domains).
In Section \ref{sect_hu} we prove that in many $\Omega$'s even more is true: $C_\varphi$ is hypercyclic if and only if it is hereditarily hypercyclic (Theorem \ref{th_taut_and_conv_impl_hu}).
Again, it turns out that $N$-dimensional $\Omega$'s with relatively compact Carath\'{e}odory balls (Theorem \ref{th_carat_hu}) and simply connected and infinitely connected planar domains (Section \ref{sect_one_dim}) admit this property.
It remains an open question if every hypercyclic operator $C_\varphi$ is automatically hereditarily hypercyclic, when $\Omega$ is an arbitrary connected Stein manifold.

Actually, given an increasing sequence $(n_l)_l\in\NN$, in this paper we study the following problems:
\begin{enumerate}
\renewcommand{\theenumi}{(p\arabic{enumi})}
\renewcommand{\labelenumi}{\theenumi}
\item\label{probl_u} Hypercyclicity of $C_\varphi$.
\item\label{probl_u_nl} Hypercyclicity of $C_\varphi$ with respect to $(n_l)_l$.
\item\label{probl_hu_nl} Hereditary hypercyclicity of $C_\varphi$ with respect to $(n_l)_l$.
\item\label{probl_hu} Hereditary hypercyclicity of $C_\varphi$.
\end{enumerate}
For the definition of this notions see Definition \ref{def_hyp_2}.
It is clear that there hold the implications \ref{probl_hu} $\Rightarrow$ \ref{probl_hu_nl} $\Rightarrow$ \ref{probl_u_nl} $\Rightarrow$ \ref{probl_u}, and in Sections \ref{sect_hu} and \ref{sect_one_dim} we prove that \ref{probl_u} $\Rightarrow$ \ref{probl_hu} in some $\Omega$'s.

\subsection{Preliminaries}\label{sect_preliminaries}

In this paper $\DD$ denotes the open unit disc in $\CC$, $\DD_*=\DD\setminus\lbrace 0\rbrace$ and $\CC_*=\CC\setminus\lbrace 0\rbrace$.

Throughout this section let $\Omega, \Omega'$ be connected finite-dimensional complex analytic manifolds.

By $\OO(\Omega,\Omega')$ we denote the set of all holomorphic mappings $f:\Omega\to\Omega'$.
In the case $\Omega'=\CC^M$ we equip the linear space $\OO(\Omega,\CC^M)$ with the compact-open topology, i.e. the topology of uniform convergence on compact subsets.
In the case $M=1$ we shortly write $\OO(\Omega)$ instead of $\OO(\Omega,\CC)$.

By $\UZWO=\Omega\cup\lbrace\UZWIO\rbrace$ we denote the usual compactification of (locally compact topological space) $\Omega$ by an element $\UZWIO\not\in\Omega$.

We say that a sequence $(K_l)_{l\in\NN}$ of compact subsets of $\Omega$ is an \textit{exhaustion} of $\Omega$ if $\bigcup_l K_l=\Omega$ and $K_l\subset\INT K_{l+1}$.
We say that $\Omega$ is \textit{countable at infinity} if there exists an exhaustion of $\Omega$.

We say that a sequence of holomorphic functions $f_n:\Omega\to\Omega'$ is \textit{compactly divergent} (\textit{in} $\OO(\Omega,\Omega')$) if for each compact subsets $K\subset\Omega$, $L\subset\Omega'$ there is $n_0$ such that $f_n(K)\cap L=\varnothing$ for all $n\geq n_0$.
We say that$(f_n)_n$ is \textit{run-away} (\textit{in} $\OO(\Omega,\Omega')$) if for each compact subsets $K\subset\Omega, L\subset\Omega'$ there is $n$ such that $f_n(K)\cap L=\varnothing$.
In the case $\Omega=\Omega'$ it is always enough to consider the situation when $L=K$.
Note that in the case when $\Omega$ and $\Omega'$ are countable at infinity, the sequence $(f_n)_n$ is run-away if and only if it has a compactly divergent subsequence, and $(f_n)_n$ is compactly divergent if and only if each of its subsequences is run-away.

We say that a holomorphic map $f:\Omega\to\Omega'$ is \textit{regular} if its derivative is a monomorphism at each point of $\Omega$.
We say that $f$ is \textit{almost proper} if for every compact set $K\subset\Omega'$, each connected component of $f^{-1}(K)$ is compact.
We say that $f$ is \textit{proper} if $f^{-1}(K)$ is compact for every compact set $K\subset\Omega'$.

For points $z,w\in\Omega$ let
\begin{eqnarray*}
c_\Omega^*(z,w) &:=& \sup \lbrace |F(z)|: F\in\OO(\Omega,\DD), F(w)=0\rbrace,\\
c_\Omega(z,w) &:=& \frac12 \log\frac{1+c_\Omega^*(z,w)}{1-c_\Omega^*(z,w)}.
\end{eqnarray*}
Here $c_\Omega$ is the Carath\'{e}odory pseudodistance and $c_\Omega^*$ is the M\"{o}bius pseudodistance in $\Omega$.
For more informations we refer the reader to \cite{jarnickipflug}.
In Sections \ref{sect_simp_char} and \ref{sect_hu} we deal with (connected) Stein manifolds for which all balls w.r.t. $c_\Omega$ are relatively compact in the topology of $\Omega$, showing that they have some good hypercyclicity properties.

Given a set $X$, a mapping $T:X\to X$ and an integer number $n$, we denote by $\COMP{T}{n}$ the $n$-th iteration of $T$, i.e. the mapping $T\circ T\circ\ldots\circ T$ ($n$ times).

Any topological vector spaces in this paper are assumed to lie over the filed $\CC$.

\begin{DEF}\label{def_hyp_1}
Let $T_n$ ($n\in\NN$) be continuous self-maps of a topological space $X$.
\begin{enumerate}
\renewcommand{\theenumi}{(\arabic{enumi})}
\renewcommand{\labelenumi}{\theenumi}
\item We say that the sequence $(T_n)_n$ is \textit{topologically transitive} if for every non-empty open subsets $U, V\subset X$ there exists $n$ such that $T_n(U)\cap V\neq\varnothing$.
\item We call a point $x\in X$ an \textit{universal element} for $(T_n)_n$ if the set $\lbrace T_n(x):n\in\NN\rbrace$ is dense in $X$.
We say that the sequence $(T_n)_n$ is \textit{universal} if it admits a universal element.
We say that $(T_n)_n$ is \textit{hereditarily universal} if each of its subsequences is universal.
\end{enumerate}
\end{DEF}

\begin{DEF}\label{def_hyp_2}
Let $T$ be a continuous linear operator on a topological vector space $X$.
We say that the mapping $T$ is \textit{hypercyclic} (resp. \textit{hereditarily hypercyclic}) with respect to an increasing sequence $(n_l)_l\subset\NN$ if the sequence $(\COMP{T}{n_l})_l$ is universal (resp. hereditarily universal).
We call $T$ shortly \textit{hypercyclic} (resp. \textit{hereditarily hypercyclic}) if it is hypercyclic (resp. hereditarily hypercyclic) w.r.t. the full sequence $(n)_n$.
If $T$ is hypercyclic, then any universal element of $(\COMP{T}{n})_n$ is called a hypercyclic vector.
\end{DEF}

Let us recall two classical theorems which are essential for our considerations.
For the first one see e.g. \cite[Theorem 1]{grosseerdmann_families}).
The second is due to A. Peris (see \cite[Proposition 1]{grosseerdmann_families}).

\begin{THE}\label{th_birkhoff}
Let $X$ be a separable Fr\'{e}chet space.
A sequence $(T_n)_n$ of continuous self-maps of $X$ is topologically transitive if and only if the set of its universal elements is dense in $X$.

Moreover, if one of these conditions holds, then the set of universal elements for $(T_n)_n$ is a dense $G_\delta$-subset of $X$.
\end{THE}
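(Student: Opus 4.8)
The plan is to prove the two implications separately and then deduce the ``moreover'' part from the argument. Assume throughout that $X$ is a separable Fr\'{e}chet space, hence a separable completely metrizable space, and fix a countable base $(V_k)_k$ of non-empty open subsets of $X$. The key observation connecting the two notions is the identity
\[
\{x\in X:\ x\text{ is a universal element for }(T_n)_n\}
=\bigcap_{k}\ \bigcup_{n}\ T_n^{-1}(V_k),
\]
which holds because the orbit $\{T_n(x):n\in\NN\}$ is dense if and only if it meets every basic open set $V_k$. Since each $T_n$ is continuous, each set $\bigcup_n T_n^{-1}(V_k)$ is open, so the set of universal elements is automatically a $G_\delta$-subset of $X$; this already prepares the ground for the final assertion.

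First I would prove the easy direction: if the set $U$ of universal elements is dense, then $(T_n)_n$ is topologically transitive. Given non-empty open sets $U_0,V_0\subset X$, density gives a universal element $x\in U_0$; then $\{T_n(x)\}_n$ is dense, so $T_n(x)\in V_0$ for some $n$, whence $T_n(U_0)\cap V_0\ni T_n(x)$ is non-empty. Conversely, assume $(T_n)_n$ is topologically transitive. I claim each open set $\bigcup_n T_n^{-1}(V_k)$ is dense: for any non-empty open $W\subset X$, transitivity applied to the pair $(W,V_k)$ yields $n$ with $T_n(W)\cap V_k\neq\varnothing$, i.e. $W\cap T_n^{-1}(V_k)\neq\varnothing$, so $W$ meets $\bigcup_n T_n^{-1}(V_k)$. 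Now $X$, being a Fr\'{e}chet space, is a Baire space, so the countable intersection $\bigcap_k\bigcup_n T_n^{-1}(V_k)$ of dense open sets is a dense $G_\delta$; by the identity above this intersection is exactly the set of universal elements, which is therefore dense (and in fact a dense $G_\delta$). This simultaneously establishes the second implication and the ``moreover'' statement.

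I do not expect a genuine obstacle here; the only points requiring a little care are (i) verifying that separability is what lets us choose a \emph{countable} base $(V_k)_k$ so that the Baire category argument applies to a \emph{countable} family of dense open sets, and (ii) invoking completeness of $X$ to guarantee the Baire property. Both are standard for separable Fr\'{e}chet spaces. Since the excerpt cites \cite[Theorem 1 and Proposition 1]{grosseerdmann2}, I would simply present the short self-contained argument above rather than reproducing any external machinery.
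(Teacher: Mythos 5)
Your argument is correct and complete: the identity expressing the universal elements as $\bigcap_{k}\bigcup_{n}T_n^{-1}(V_k)$ over a countable base, the two implications, and the Baire category step all go through, and separability plus completeness of the Fr\'{e}chet space are used exactly where needed. The paper itself gives no proof of this theorem but only cites Grosse-Erdmann (Theorem 1 and Proposition 1 of the survey), and your self-contained Baire argument is precisely the standard proof behind that citation, so there is nothing to add.
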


\begin{THE}\label{th_empty_or_dense}
Let $X$ be a separable Fr\'{e}chet space.
Suppose that $(T_n)_n$ is a sequence of continuous self-maps of $X$ such that each $T_n$ has dense range and that the family $(T_n)_n$ is commuting, i.e. $$T_n\circ T_m = T_m\circ T_n,\,\textnormal{ for }m, n\in\NN.$$
Then the set of universal elements for $(T_n)_n$ is empty or dense.
\end{THE}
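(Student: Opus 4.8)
The plan is to prove the dichotomy directly: if the set $U$ of universal elements for $(T_n)_n$ is non-empty, I fix $x_0\in U$, so that the orbit $O(x_0):=\{T_n x_0:n\in\NN\}$ is dense in $X$, and I show that $U$ is dense by proving $O(x_0)\subseteq U$. The only auxiliary fact needed is elementary and purely topological: if $S:X\to X$ is continuous with dense range and $D\subseteq X$ is dense, then $S(D)$ is dense. Indeed, continuity gives $S(\overline{D})\subseteq\overline{S(D)}$, so from $\overline{D}=X$ we obtain $S(X)\subseteq\overline{S(D)}$; passing to closures and using that $S$ has dense range (so $\overline{S(X)}=X$) yields $\overline{S(D)}=X$.

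Next I would verify $O(x_0)\subseteq U$. Fix $m\in\NN$. By commutativity, $T_n(T_m x_0)=T_m(T_n x_0)$ for every $n$, hence $\{T_n(T_m x_0):n\in\NN\}=T_m\bigl(O(x_0)\bigr)$. Since $O(x_0)$ is dense and $T_m$ is continuous with dense range, the auxiliary fact shows that $T_m\bigl(O(x_0)\bigr)$ is dense, i.e. $T_m x_0$ is a universal element of $(T_n)_n$. As $m$ was arbitrary, $O(x_0)\subseteq U$. But $O(x_0)$ is dense, precisely because $x_0\in U$; therefore $U$ is dense, which is the assertion.

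I expect no serious obstacle here: the argument uses only continuity, dense range and commutativity of the $T_n$, and in fact goes through in an arbitrary topological space $X$ — the separability and Fréchet hypotheses are not exploited in this particular statement (they enter in Theorem \ref{th_birkhoff}, where one wants to upgrade "dense" to "dense $G_\delta$" and to link universality with topological transitivity). The one subtlety worth flagging is that the density of $O(x_0)$ is used twice, in two different roles: first as the dense input set fed through each map $T_m$, and then, a posteriori, as a dense subset of $U$ itself.
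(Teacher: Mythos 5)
Your proof is correct. The paper does not prove this statement itself---it is quoted from \cite{grosseerdmann2} (Proposition 1 there)---and your argument is exactly the standard one used in that reference: by commutativity the orbit $\{T_n(T_m x_0):n\}$ equals $T_m(\{T_n x_0:n\})$, which is dense because a continuous map with dense range sends dense sets to dense sets, so every point of the dense orbit of a universal element is itself universal. Your side remark is also accurate: the separability and Fr\'{e}chet hypotheses play no role here and the dichotomy holds in an arbitrary topological space.
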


\noindent
From these theorems there follows an immediate corollary.
It plays a key role in deriving the equivalent conditions for hypercyclicity and hereditary hypercyclicity (Section \ref{sect_general}), because we shall often investigate topological transitivity instead of hypercyclicity.
A similar argument was used in \cite{grosseerdmann_mortini}.

\begin{CRL}\label{cor_hc_tt}
Let $X$ be a separable Fr\'{e}chet space, let $T:X\to X$ be a continuous map, and let $(n_l)_l\subset\NN$ be an increasing sequence.
Then $T$ is hypercyclic w.r.t. $(n_l)_l$ if and only if the sequence $(\COMP{T}{n_l})_l$ is topologically transitive.
\end{CRL}

Now, let us recall the notion of holomorphic convexity and some notions connected with it.
They are extensively used in the paper.
For a compact set $K\subset\Omega$ by $\HULLO{K}$ or $\HULLZO{K}$ we denote the \textit{holomorphic hull} of the set $K$ with respect to $\Omega$, i.e. $$\HULLO{K}:=\lbrace z\in\Omega: |f(z)|\leq\sup_K |f|\textnormal{ for every }f\in\OO(\Omega) \rbrace.$$
The set $K$ is called \textit{holomorphically convex} if $K=\HULLO{K}$; we call such a set shortly: $\Omega$\textit{-convex}.
For $\Omega=\CC^N$ we shortly write $\HULLP{K}$ or $\HULLZP{K}$, we call these sets \textit{polynomial hulls}, and we say that $K$ is \textit{polynomially convex} if $K=\HULLP{K}$.

We call a domain $U\subset\Omega$ a Runge domain with respect to $\Omega$, if every function from $\OO(U)$ can be approximated locally uniformly on $U$ by functions from $\OO(\Omega)$.
In \cite{hormander} the reader can find many informations about holomorphic hulls and Runge domains.
Neverthless, below we recall these theorems which are important for our purposes.

Let us introduce a special class of complex analytic manifolds, called Stein manifolds.
Many informations may be found in \cite[Chapter 5]{hormander}.
The reader who is not familiar with Stein manifolds, for the rest of the paper may assume that $\Omega$ is a domain of holomorphy in $\CC^N$.	

\begin{DEF}\label{def_stein}
A complex analytic manifold $\Omega$ of (finite) dimension $N$ is said to be a Stein manifold, if:
\begin{enumerate}
\renewcommand{\theenumi}{(\arabic{enumi})}
\renewcommand{\labelenumi}{\theenumi}
\item\label{def_stein_1} $\Omega$ is countable at infinity,
\item\label{def_stein_2} the set $\HULLO{K}$ is compact for every compact subset $K$ of $\Omega$,
\item\label{def_stein_3} the family $\OO(\Omega)$ separates points in $\Omega$, i.e. for each $z,w\in\Omega$, $z\neq w$, there exists $f\in\OO(\Omega)$ with $f(z)\neq f(w)$,
\item\label{def_stein_4} for any $z\in\Omega$ there is a map $F\in\OO(\Omega,\CC^N)$ which forms a local coordinate system at $z$, i.e. the derivative of $F$ at $z$ is an isomorphism.
\end{enumerate}
\end{DEF}

\noindent

The assumption that $\Omega$ is countable at infinity guarantees that the space $\OO(\Omega)$ is a Fr\'{e}chet space, because its topology is given by a countable family of seminorms $p_l:f\mapsto\sup_{K_l}|f|$, $l\in\NN$.
Moreover, this assumption implies that $\OO(\Omega)$ is separable, because the space $\CL(\Omega)$ (endowed with the same topology) is so.
This observation allows us to use Corollary \ref{cor_hc_tt} for the space $X=\OO(\Omega)$, with $\Omega$ being a connected Stein manifold.
But in fact, the main reason for which we work with Stein manifolds is the following theorem (see \cite[Corollary 5.2.9]{hormander}):

\begin{THE}[Oka-Weil]\label{the_oka_weil}
Let $\Omega$ be a Stein manifold and let $K\subset\Omega$ be a compact $\Omega$-convex set.
Then every function which is holomorphic in a neighborhood of $K$ can be approximated uniformly on $K$ by functions from $\OO(\Omega)$.
\end{THE}

The Oka-Weil theorem turns out to be a very good tool for showing topological transitivity of sequence $(\COMPCPHI{n_l})_l$, because - as we shall see - the question of topological transitivity of that sequence may be translated to a question of approximation of some functions by elements of $\OO(\Omega)$.
Its one-dimensional version, known as Runge theorem, was used in \cite{grosseerdmann_mortini}.

The following well-known fact characterises Runge domains in a Stein manifold $\Omega$ in the language of holomorphic hulls:

\begin{THE}\label{th_runge_domain_characterisation}
Let $\Omega$ be a connected Stein manifold and let $U\subset\Omega$ be a domain which is also a Stein manifold.
Then the following conditions are equivalent:
\begin{enumerate}
\renewcommand{\theenumi}{(\arabic{enumi})}
\renewcommand{\labelenumi}{\theenumi}
\item The domain $U$ is a Runge domain in $\Omega$.
\item For every compact subset $K\subset U$ we have $\HULLO{K}=\HULL{K}{U}$.
\item For every compact subset $K\subset U$ we have $\HULLO{K}\cap U=\HULL{K}{U}$.
\item For every compact subset $K\subset U$ we have $\HULLO{K}\cap U\subset\subset U$.
\end{enumerate}
\end{THE}

For the proof in the case when $\Omega$ is a domain of holomorphy in $\CC^N$, see e.g. the proof of Theorem 4.3.3 in \cite{hormander}.
The proof in the case of Stein manifold is actually the same; it only uses \cite[Corollary 5.2.9]{hormander} instead of \cite[Theorem 4.3.2]{hormander}.

To simplify notation in this article, we introduce the following notion: we say that compact sets $K,L\subset\Omega$ are \textit{separable in} $\Omega$ ($\Omega$ is a connected Stein manifold), if there exists a function $F\in\OO(\Omega)$ which \textit{separates} $K$ and $L$, i.e. $$\HULLP{F(K)}\cap\HULLP{F(L)} = \varnothing.$$
In this situation there holds $$\HULLO{K}\cap\HULLO{L}=\varnothing,$$ because $\HULLO{K}\subset F^{-1}(\HULLP{F(K)})$, $\HULLO{L}\subset F^{-1}(\HULLP{F(L)})$.

In the lemmas below we present some natural properties of holomorphic hull of a sum of two compact subsets.
For the case of polynomial hull they can be found e.g. in \cite{stout}(Theorem 1.6.19 and Corollary 1.5.4).
However, we were not able to find them in the form as we need, so for the reader's convenience we present sketches of proofs.

\begin{LEM}\label{lemat_otoczki_1}
Let $\Omega$ be a connected Stein manifold and let $K, L\subset\Omega$ be compact subsets.
Then the following conditions are equivalent:
\begin{enumerate}
\renewcommand{\theenumi}{(\arabic{enumi})}
\renewcommand{\labelenumi}{\theenumi}
\item\label{lem_ot_funkcja} The sets $K, L$ are separable in $\Omega$.
\item\label{lem_ot_rozklad} There exist open and disjoint subsets $U, V\subset\Omega$ such that $\HULLO{K}\subset U$, $\HULLO{L}\subset V$ and $\HULLZO{K\cup L}\subset U\cup V$.
\item\label{lem_ot_suma} $\HULLO{K}\cap\HULLO{L}=\varnothing$ and $\HULLZO{K\cup L}=\HULLO{K}\cup\HULLO{L}$.
\end{enumerate}

In particular, if $K$ and $L$ are disjoint and $\Omega$-convex, then $K\cup L$ is $\Omega$-convex if and only if $K$ and $L$ are separable in $\Omega$.
\end{LEM}

\begin{proof}[Sketch of the proof]
\ref{lem_ot_suma} $\Rightarrow$ \ref{lem_ot_funkcja}: Take a function $f$ equal $0$ in a neighborhood of $\HULLO{K}$ and $1$ in a neighborhood of $\HULLO{L}$.
By \ref{lem_ot_suma}, this function is holomorphic in a neighborhood of the $\Omega$-convex set $\HULLZO{K\cup L}$, so in view of the Oka-Weil theorem it can be approximated on this set by functions holomorphic on $\Omega$.
Hence there is some $F\in\OO(\Omega)$ such that $|F-f|<\frac12$ on $\HULLZO{K\cup L}$.
This $F$ satisfies \ref{lem_ot_funkcja}.

\ref{lem_ot_funkcja} $\Rightarrow$ \ref{lem_ot_rozklad}: Define $U:=F^{-1}(U_0), V:=F^{-1}(V_0)$, where $U_0$ and $V_0$ are some disjoint open neighborhoods of the compact sets $\HULLZP{F(K)}$ and $\HULLZP{F(L)}$, respectively.
The condition $\HULLZP{F(K)}\cap\HULLZP{F(L)} = \varnothing$ implies that $\HULLZP{F(K)\cup F(L)} = \HULLZP{F(K)}\cup\HULLZP{F(L)}$ (this equality holds for polynomial hulls on the complex plane; see \cite[Theorem 1.3.3]{hormander}).
Therefore $\HULLZO{K\cup L}\subset F^{-1}\left(\HULLZP{F(K\cup L)}\right)\subset U\cup V$.

\ref{lem_ot_rozklad} $\Rightarrow$ \ref{lem_ot_suma}: The right-to-left inclusion is obvious, so we prove the other one.
Fix $z_0\in I:=\HULLZO{K\cup L}$.
We can assume that $z_0\in U$.
We prove that $z_0\in\HULLO{K}$.
The characteristic function $\chi_U$ of $U$, restricted to the set $U\cup V$, is in view of \ref{lem_ot_rozklad} holomorphic in a neighborhood of the $\Omega$-convex set $I$, so there exists a sequence of functions $(g_n)_n\subset\OO(\Omega)$ uniformly convergent to $\chi_U$ on $I$.
For any function $f\in\OO(\Omega)$ the sequence $(fg_n)_n$ converges uniformly to $f\chi_U$ on $I$, so
$$|f(z_0)| = \lim_{n\to\infty}|f(z_0)g_n(z_0)| \leq \lim_{n\to\infty}\sup_{z \in K\cup L}|f(z)g_n(z)|=\sup_{z\in K}|f(z)|.$$
This implies that $z_0\in\HULLO{K}$ and finishes the proof.
\end{proof}

\begin{LEM}\label{lemat_otoczki_2}
Let $\Omega$ be a connected Stein manifold and let $K,L\subset\Omega$ be compact subsets of $\Omega$.
If $K\cap L=\varnothing$ and the set $K\cup L$ is $\Omega$-convex, then $K$ and $L$ are both $\Omega$-convex.
\end{LEM}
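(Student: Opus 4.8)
The plan is to manufacture a single function $F\in\OO(\Omega)$ that separates $K$ from $L$, and then to deduce from the hypothesis that the hull $\HULLO{K}$ cannot reach into $L$; by symmetry the same argument handles $L$. The only non-elementary ingredient will be the Oka--Weil type approximation theorem in pseudoconvex domains, used exactly as in the sketch of proof of Lemma \ref{lemat_otoczki_1}.

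First I would construct $F$. Since $K$ and $L$ are disjoint compact subsets of $\Omega$, pick disjoint open sets $U\supset K$ and $V\supset L$ and let $f$ be the function equal to $0$ on $U$ and $1$ on $V$; then $f$ is holomorphic on the neighbourhood $U\cup V$ of $K\cup L$. By assumption $K\cup L$ is $\Omega$-convex, so by \cite[Theorems 4.3.2 and 4.3.4]{hormander} the function $f$ can be approximated uniformly on $K\cup L$ by functions from $\OO(\Omega)$; choose $F\in\OO(\Omega)$ with $|F-f|<\tfrac13$ on $K\cup L$. Then $|F|<\tfrac13$ on $K$ while $|F|>\tfrac23$ on $L$.

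Next I would exploit the inclusion $\HULLO{K}\subset\HULLO{K\cup L}$, which is immediate from the definition of the hull, together with the hypothesis $\HULLO{K\cup L}=K\cup L$; hence $\HULLO{K}\subset K\cup L$. If some $z\in\HULLO{K}$ were to lie in $L$, then on one hand $|F(z)|>\tfrac23$, and on the other hand $|F(z)|\le\sup_{K}|F|<\tfrac13$, a contradiction. Therefore $\HULLO{K}\cap L=\varnothing$, so $\HULLO{K}\subset K$, and since always $K\subset\HULLO{K}$ we conclude $\HULLO{K}=K$. Interchanging the roles of $K$ and $L$ (replacing $F$ by $1-F$) gives $\HULLO{L}=L$. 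If $K$ or $L$ is empty the statement is trivial.

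I do not expect a real obstacle here: the argument is short and its only substantive input is the approximation theorem used to produce $F$, which is available precisely because $K\cup L$ is assumed $\Omega$-convex. One could alternatively first observe that the above estimates already force $\HULLO{K}\cap\HULLO{L}=\varnothing$ and then invoke the implication \ref{lem_ot_funkcja} $\Rightarrow$ \ref{lem_ot_suma} of Lemma \ref{lemat_otoczki_1}, but passing through $F$ directly is quicker and self-contained.
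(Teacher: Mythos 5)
Your proposal is correct and follows essentially the same route as the paper: construct one function $F\in\OO(\Omega)$ separating $K$ from $L$ by approximating a locally constant function on the $\Omega$-convex set $K\cup L$ (Hörmander's Oka--Weil type theorems), then use $\HULLO{K}\subset K\cup L$ and the hull inequality $|F(z)|\le\sup_K|F|$ to exclude points of $L$ from $\HULLO{K}$. The only cosmetic difference is that the paper phrases the final contradiction via $F^{-1}$ of the hulls of $F(K)$ and $F(L)$ with $\tfrac12$-bounds, whereas you apply the defining inequality of the hull directly with $\tfrac13$-bounds; the substance is identical.
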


\begin{proof}
As above, we can find a function $F\in\OO(\Omega)$ such that $|F|<\frac12$ on $K$ and $|F-1|<\frac12$ on $L$.
There is $\HULLO{K}\subset K\cup L$ and
$$\HULLO{K}\cap L\subset F^{-1}\left(\HULLP{F(K)}\right)\cap F^{-1}(F(L))\subset F^{-1}\left(\frac12\DD\right)\cap F^{-1}\left(1+\frac12\DD\right)=\varnothing,$$
so $\HULLO{K}\subset K$.
\end{proof}

\subsection{General results}\label{sect_general}

We start this section with formulating some necessary conditions.
They in fact appear in several papers.

\begin{PROP}\label{prop_kon_u}
Let $\Omega$ be a connected Stein manifold, $\varphi\in\OO(\Omega,\Omega)$ and let $(n_l)_l\in\NN$ be an increasing sequence.
Suppose that $C_\varphi$ is hypercyclic w.r.t. $(n_l)_l$.
Then:
\begin{enumerate}
\renewcommand{\theenumi}{(c\arabic{enumi})}
\renewcommand{\labelenumi}{\theenumi}
\item\label{pkon_u_1} The mapping $\varphi$ is injective.
\item\label{pkon_u_2} The image $\varphi(\Omega)$ is a Runge domain w.r.t. $\Omega$.
\item\label{pkon_u_3} The sequence $(\COMPPHI{n_l})_l$ is run-away.
\end{enumerate}
\end{PROP}

\begin{RM}\label{rem_c1_c2_c3}
Note that the first condition gives that $\varphi$ is a biholomorphism on its image, which is a classical result (see e.g. \cite[Theorem 2.2.1]{jakobczakjarnicki}); hence, $\varphi(\Omega)$ is a Stein manifold.
In view of Theorem \ref{th_runge_domain_characterisation}, the conditions \ref{pkon_u_1} and \ref{pkon_u_2} imply that the set $\varphi(K)$ is $\Omega$-convex for each compact $\Omega$-convex subset $K\subset\Omega$.
This immediately implies that for any integer number $n$ the set $\COMPPHI{n}(K)$ also is $\Omega$-convex.
\end{RM}

\begin{proof}[Proof of Proposition \ref{prop_kon_u}]
Let $f$ be a universal element for $(\COMPCPHI{n_l})_l$.
The first part follows from the condition \ref{def_stein_3} in Definition \ref{def_stein}.
For the second we need to prove that the restrictions $g|_{\varphi(\Omega)}, g\in\OO(\Omega)$, are dense in $\OO(\varphi(\Omega))$.
If $h\in\OO(\varphi(\Omega))$, then $h\circ\varphi$ is holomorphic on $\Omega$, so there is a sequence $(l_k)_k$ such that $f\circ\COMPPHI{n_{l_k}}\to h\circ\varphi$ on $\Omega$.
Hence $f\circ\COMPPHI{n_{l_k}-1}\to h$ on $\varphi(\Omega)$, as the mapping $\varphi$ is a biholomorphism on its image.

We prove the third part.
Let $K\subset\Omega$ be compact.
For each $j\in\NN$ there exists $l_j$ such that $|f\circ\COMPPHI{n_{l_j}}-j|\leq\frac{1}{j}$ on $K$.
This implies $$\inf_{z\in\COMPPHI{n_{l_j}}(K)}|f(z)| = \inf_{z\in K}|f\circ\COMPPHI{n_{l_j}}(z)|\geq j-\frac{1}{j}>\sup_{z \in K}|f(z)|\;\textnormal{ for big }j,$$
so $\COMPPHI{n_{l_j}}(K)\cap K=\varnothing$.
\end{proof}

Necessary conditions for hereditary hypercyclicity are similar:

\begin{PROP}\label{prop_kon_hu}
Let $\Omega$ be a connected Stein manifold, $\varphi\in\OO(\Omega,\Omega)$ and let $(n_l)_l\in\NN$ be an increasing sequence.
Suppose that $C_\varphi$ is hereditarily hypercyclic w.r.t. $(n_l)_l$.
Then:
\begin{enumerate}
\renewcommand{\theenumi}{(h\arabic{enumi})}
\renewcommand{\labelenumi}{\theenumi}
\item\label{pkon_hu_1} The mapping $\varphi$ is injective.
\item\label{pkon_hu_2} The image $\varphi(\Omega)$ is a Runge domain w.r.t. $\Omega$.
\item\label{pkon_hu_3} The sequence $(\COMPPHI{n_l})_l$ is compactly divergent.
\end{enumerate}
\end{PROP}

\begin{proof}
The first two parts follow from the previous proposition.
For the last one it is enough to recall that a sequence of holomorphic mappings is compactly divergent if and only if each of its subsequences is run-away.
\end{proof}

It is natural to ask whether the necessary conditions given by Propositions \ref{prop_kon_u} and \ref{prop_kon_hu} are sufficient.
In \cite{grosseerdmann_mortini} it is shown that this is true if $\Omega$ is a simple connected or an infinitely connected planar domain (see also Section \ref{sect_one_dim} in this paper).
In Section \ref{sect_simp_char} we give a class of higher-dimensional Stein manifolds where this also holds.
But in general the above necessary conditions are not sufficient, as we can see using a simple example $\Omega=\DD_*$ and $\varphi(z)=\frac12 z$ - then by Theorems \ref{th_char} or \ref{th_char_w_c00} the operator $C_\varphi$ is not hypercyclic, although it satisfies the conditions \ref{pkon_u_1}, \ref{pkon_u_2}, \ref{pkon_u_3}.

We are going to give equivalent conditions for hypercyclicity and hereditary hypercyclicity of $C_\varphi$.
We use Corollary \ref{cor_hc_tt}, so let us first say what topological transitivity of the sequence $(\COMPCPHI{n_l})_l$ means.
Let $\varphi:\Omega\to\Omega$ be an injective holomorphic mapping.
The sets $$W_{f_0,K,\epsilon}:=\lbrace f\in\OO(\Omega): |f-f_0|<\epsilon\textnormal{ on }K\rbrace,\;f_0\in\OO(\Omega),\epsilon>0,K\subset\Omega\textnormal{ compact},$$ form a basis of the topology of $\OO(\Omega)$.
Thus, the sequence $(\COMPCPHI{n_l})_l$ is topologically transitive if and only if for every $\epsilon>0$, $g, h$ holomorphic on $\Omega$ and compact $K\subset\Omega$ there are $l\in\NN$ and a function $f$ holomorphic on $\Omega$ such that
\begin{align*}
|f-g|<\epsilon\textnormal{ on }K\textnormal{ and }|f\circ\COMPPHI{n_l}-h|<\epsilon\textnormal{ on }K.
\end{align*}
As the mapping $\varphi$ is injective, the above condition takes the form:
\begin{align*}\label{warunek_birkhoff}
\tag{T} |f-g|<\epsilon\textnormal{ on }K\textnormal{ and }|f-h\circ\COMPPHI{-n_l}|<\epsilon\textnormal{ on }\COMPPHI{n_l}(K).
\end{align*}
Note that in the case when $\Omega$ is a Stein manifold we can restrict to considering only $\Omega$-convex sets $K$.

\begin{THE}\label{th_char}
Let $\Omega$ be a connected Stein manifold, $\varphi\in\OO(\Omega,\Omega)$  and let $(n_l)_l\in\NN$ be an increasing sequence.
Then:
\begin{enumerate}
\renewcommand{\theenumi}{(\arabic{enumi})}
\renewcommand{\labelenumi}{\theenumi}
\item\label{th_char_u} The operator $C_\varphi$ is hypercyclic w.r.t. $(n_l)_l$ if and only if $\varphi$ is injective and for every $\Omega$-convex compact subset $K\subset\Omega$ there exists $l$ such that $K\cap\COMPPHI{n_l}(K)=\varnothing$ and the set $K\cup\COMPPHI{n_l}(K)$ is $\Omega$-convex.
\item\label{th_char_hu} The operator $C_\varphi$ is hereditarily hypercyclic w.r.t. $(n_l)_l$ if and only if $\varphi$ is injective and for every $\Omega$-convex compact subset $K\subset\Omega$ there exists $l_0$ such that $K\cap\COMPPHI{n_l}(K)=\varnothing$ and the set $K\cup\COMPPHI{n_l}(K)$ is $\Omega$-convex for each $l\geq l_0$.
\end{enumerate}
\end{THE}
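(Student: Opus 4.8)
The plan is to reduce both parts to the reformulation~(tt) of topological transitivity. By Corollary~\ref{crl_univ_and_tt}, $C_\varphi$ is hypercyclic w.r.t.\ $(n_l)_l$ if and only if $(\COMPCPHI{n_l})_l$ is topologically transitive, and, since $\varphi$ is injective in all situations we consider (it figures in every criterion of the theorem and, conversely, is necessary for (hereditary) hypercyclicity by Propositions~\ref{prop_kon_u} and~\ref{prop_kon_hu}), this means: for all $\epsilon>0$, $g,h\in\OO(\Omega)$ and compact $K\subset\Omega$ there exist $l$ and $f\in\OO(\Omega)$ satisfying~(tt). I would prove part~\ref{th_char_u} in both directions, and then obtain part~\ref{th_char_hu} by bookkeeping: hereditary hypercyclicity w.r.t.\ $(n_l)_l$ is the same as hypercyclicity w.r.t.\ every subsequence $(n_{l_k})_k$, and, since the quantifiers over the $\Omega$-convex set $K$ and over subsequences commute, ``the criterion of part~\ref{th_char_u} holds for every subsequence'' is literally the criterion of part~\ref{th_char_hu} --- for a fixed $\Omega$-convex $K$, the required property occurs at some index of \emph{every} subsequence exactly when it holds at \emph{all sufficiently large} indices. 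Thus~\ref{th_char_hu} follows from~\ref{th_char_u} applied to all subsequences, together with injectivity of $\varphi$ from Proposition~\ref{prop_kon_hu}.

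For the sufficiency in~\ref{th_char_u}, I assume $\varphi$ injective --- so $\varphi$, and hence each iterate $\COMPPHI{n}$, is a biholomorphism of $\Omega$ onto an open subset --- and that the combinatorial condition holds. Given $g,h,K,\epsilon$, I replace $K$ by its hull $K':=\HULLO{K}$ and use the hypothesis to pick $l$ with $K'\cap\COMPPHI{n_l}(K')=\varnothing$ and $I:=K'\cup\COMPPHI{n_l}(K')$ an $\Omega$-convex compact. The crucial step is to manufacture a holomorphic ``transfer function'' near $I$: choose disjoint open sets $U_1\supset K'$ and $U_2\supset\COMPPHI{n_l}(K')$ with $U_2\subset\COMPPHI{n_l}(\Omega)$, and set $\psi:=g$ on $U_1$ and $\psi:=h\circ\COMPPHI{-n_l}$ on $U_2$. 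Then $\psi$ is holomorphic on the neighborhood $U_1\cup U_2$ of the $\Omega$-convex set $I$, so the approximation theorem for pseudoconvex domains (\cite[Theorems~4.3.2 and~4.3.4]{hormander}) provides $f\in\OO(\Omega)$ with $|f-\psi|<\epsilon$ on $I$. As $K\subset K'$ and $\COMPPHI{n_l}(K)\subset\COMPPHI{n_l}(K')$, this $f$ together with the chosen $l$ witnesses~(tt), so $C_\varphi$ is hypercyclic w.r.t.\ $(n_l)_l$.

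For the necessity in~\ref{th_char_u}, I assume $C_\varphi$ is hypercyclic w.r.t.\ $(n_l)_l$. By Proposition~\ref{prop_kon_u}, $\varphi$ is injective and $\varphi(\Omega)$ is Runge w.r.t.\ $\Omega$, whence (Remark~\ref{rem_runge}) $\COMPPHI{n}(L)$ is $\Omega$-convex for every $\Omega$-convex compact $L$ and every $n$. Fix an $\Omega$-convex compact $K$ and feed $g\equiv 0$, $h\equiv 1$, $\epsilon=\tfrac14$ into~(tt): this yields $l$ and $f\in\OO(\Omega)$ with $|f|<\tfrac14$ on $K$ and $|f-1|<\tfrac14$ on $\COMPPHI{n_l}(K)$. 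Since $|f|>\tfrac34$ on the (already $\Omega$-convex) set $\COMPPHI{n_l}(K)$ while $|f|<\tfrac14$ on $K=\HULLO{K}$, we get $K\cap\COMPPHI{n_l}(K)=\varnothing$ and $\HULLO{K}\cap\HULLO{\COMPPHI{n_l}(K)}=\varnothing$. Moreover the polynomial hulls $\HULLP{f(K)}$ and $\HULLP{f(\COMPPHI{n_l}(K))}$ are contained in the disjoint sets $\tfrac14\overline{\DD}$ and $1+\tfrac14\overline{\DD}$, so condition~\ref{lem_ot_funkcja} of Lemma~\ref{lemat_otoczki_1} holds with $F=f$. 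The lemma then gives $\HULLO{K\cup\COMPPHI{n_l}(K)}=\HULLO{K}\cup\HULLO{\COMPPHI{n_l}(K)}=K\cup\COMPPHI{n_l}(K)$, i.e.\ $K\cup\COMPPHI{n_l}(K)$ is $\Omega$-convex, which closes part~\ref{th_char_u}.

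The step I expect to be the main obstacle is exactly the $\Omega$-convexity of the union $K\cup\COMPPHI{n_l}(K)$ --- the ``disjoint sum of holomorphically convex sets'' difficulty mentioned in the introduction. In the necessity direction it is resolved by passing the separating function $f$ delivered by topological transitivity through Lemma~\ref{lemat_otoczki_1}, after first using the Runge property of $\varphi(\Omega)$ to ensure the piece $\COMPPHI{n_l}(K)$ is itself $\Omega$-convex; in the sufficiency direction the corresponding work is hidden in replacing $K$ by $\HULLO{K}$ before applying the hypothesis and in checking that $\COMPPHI{-n_l}$ is available on a neighborhood of $\COMPPHI{n_l}(\HULLO{K})$, which rests on injectivity of $\varphi$ (hence the biholomorphism onto an open image). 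Everything else --- the two approximation arguments and the subsequence reduction for part~\ref{th_char_hu} --- is routine once Lemmas~\ref{lemat_otoczki_1} and~\ref{lemat_otoczki_2}, Remark~\ref{rem_runge} and Corollary~\ref{crl_univ_and_tt} are in hand.
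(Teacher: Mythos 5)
Your proposal is correct and follows essentially the same route as the paper: necessity via the transitivity condition (tt) with $g\equiv 0$, $h\equiv 1$ combined with Remark \ref{rem_runge} and Lemma \ref{lemat_otoczki_1}, sufficiency by gluing $g$ and $h\circ\COMPPHI{-n_l}$ near the $\Omega$-convex union and applying the approximation theorems of H\"ormander, and part \ref{th_char_hu} by the same subsequence bookkeeping. Your explicit replacement of $K$ by $\HULLO{K}$ and the choice $\epsilon=\tfrac14$ instead of $\tfrac12$ are only cosmetic differences.
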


\begin{proof}[Proof of Theorem \ref{th_char}]
First we prove \ref{th_char_u}.
Necessity.
Suppose that $C_\varphi$ is hypercyclic w.r.t. $(n_l)_l$.
In view of Corollary \ref{cor_hc_tt}, the condition (\ref{warunek_birkhoff}) holds.
Fix an $\Omega$-convex compact set $K\subset\Omega$.
By Remark \ref{rem_c1_c2_c3} we get that the set $\COMPPHI{n_l}(K)$ is $\Omega$-convex.
Using the condition (\ref{warunek_birkhoff}) for $g\equiv 0$, $h\equiv 1$, $\epsilon=\frac12$ we get that there are $f\in\OO(\Omega)$ and $l\in\NN$ such that $f(K)\subset\frac12\DD$ and $f(\COMPPHI{n_l}(K))\subset 1+\frac12\DD$.
This implies that $K$ and $\COMPPHI{n_l}(K)$ are separable in $\Omega$, so by Lemma \ref{lemat_otoczki_1} the sum $K\cup\COMPPHI{n_l}(K)$ is $\Omega$-convex.

Sufficiency.
We prove that the condition (\ref{warunek_birkhoff}) is satisfied.
Fix a compact $\Omega$-convex set $K\subset\Omega$, a number $\epsilon>0$ and holomorphic functions $g, h:\Omega\to\CC$.
Take $l$ such that $K\cap\COMPPHI{n_l}(K)=\varnothing$ and the set $I:=K\cup\COMPPHI{n_l}(K)$ is $\Omega$-convex.
The function $\widetilde{f}$ defined as $g$ in a neighborhood of $K$ and $h\circ\COMPPHI{-n_l}$ in a neighborhood of $\COMPPHI{n_l}(K)$ is well-defined and holomorphic in a neighborhood of an $\Omega$-convex set $I$, so in view of the Oka-Weil theorem it can be approximated by functions holomorphic on $\Omega$.
This implies that there exists a function $f\in\OO(\Omega)$ such that $|f-g|<\epsilon$ on $K$ and $|f-h\circ\COMPPHI{-n_l}|<\epsilon$ on $\COMPPHI{n_l}(K)$.

Observe, that \ref{th_char_hu} follows immediately from \ref{th_char_u}.
Indeed, the condition in \ref{th_char_hu} does not hold if and only if there is an $\Omega$-convex subset $K\subset\Omega$ and an increasing sequence $(l_k)_k$ such that for each $k$ the sets $K$ and $\COMPPHI{n_{l_k}}(K)$ are not disjoint or their sum is not $\Omega$-convex.
On the other hand, $C_\varphi$ is not hereditarily hypercyclic w.r.t. $(n_l)_l$ if and only if for some increasing sequence $(l_k)_k$ it is not hypercyclic w.r.t. $(n_{l_k})_k$.
In view of \ref{th_char_u}, these conditions are equivalent.
\end{proof}

\begin{THE}\label{th_char_2}
Let $\Omega$ be a connected Stein manifold, $\varphi\in\OO(\Omega,\Omega)$  and let $(n_l)_l\in\NN$ be an increasing sequence.
Then:
\begin{enumerate}
\renewcommand{\theenumi}{(\arabic{enumi})}
\renewcommand{\labelenumi}{\theenumi}
\item\label{th_char_2_u} The operator $C_\varphi$ is hypercyclic w.r.t. $(n_l)_l$ if and only if $\varphi$ is injective, $\varphi(\Omega)$ is a Runge domain w.r.t. $\Omega$ and for every $\Omega$-convex compact subset $K\subset\Omega$ there is $l\in\NN$ such that the sets $K$ and $\COMPPHI{n_l}(K)$ are separable in $\Omega$.
\item\label{th_char_2_hu} The operator $C_\varphi$ is hereditarily hypercyclic w.r.t. $(n_l)_l$  if and only if $\varphi$ is injective, $\varphi(\Omega)$ is a Runge domain w.r.t. $\Omega$ and for every $\Omega$-convex compact subset $K\subset\Omega$ there exists $l_0$ such that for each $l\geq l_0$ the sets $K$ and $\COMPPHI{n_l}(K)$ are separable in $\Omega$.
\end{enumerate}
\end{THE}

\begin{proof}
Sufficiency in both parts follows from Theorem \ref{th_char}: if the sets $K$ and $\COMPPHI{n_l}(K)$ are separable in $\Omega$, then by Lemma \ref{lemat_otoczki_1} their sum is $\Omega$-convex (since $\varphi(\Omega)$ is a Runge domain in $\Omega$, $\COMPPHI{n_l}(K)$ is $\Omega$-convex if $K$ is so).
Necessity in both parts follows directly from Theorem \ref{th_char}, Lemma \ref{lemat_otoczki_1} and the necessary conditions.
\end{proof}

Actually, from the above theorem it follows that (for an injective $\varphi$ with $\varphi(\Omega)$ being a Runge domain w.r.t. $\Omega$) to get hypercyclicity of $C_\varphi$ w.r.t. $(n_l)_l$ it suffices to prove the condition  (\ref{warunek_birkhoff}) for $g\equiv 0$, $h\equiv 1$ and $\epsilon=\frac{1}{2}$, i.e. to prove that for each compact $\Omega$-convex subset $K\subset\Omega$ there is a number $l$ and a function $f\in\OO(\Omega)$ such that 
$$|f|<\frac{1}{2}\textnormal{ on }K\textnormal{ and }|f-1|<\frac{1}{2}\textnormal{ on }\COMPPHI{n_l}(K).$$

\begin{OBS}\label{obs_hu_subseq}
Let $\Omega$ be a connected Stein manifold, $\varphi\in\OO(\Omega,\Omega)$  and let $(n_l)_l\in\NN$ be an increasing sequence.
Then the operator $C_\varphi$ is hypercyclic w.r.t. $(n_l)_l$ if and only if $(n_l)_l$  has a subsequence for which $C_\varphi$ is hereditarily hypercyclic.
\end{OBS}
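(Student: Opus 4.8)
The plan is to read the statement off from the hull-theoretic characterisations of Theorem~\ref{th_char}, together with the reduction to an exhaustion provided by Remark~\ref{rem_char_przel_rodz_wycz}. The implication ``hereditary hypercyclicity along a subsequence $\Rightarrow$ hypercyclicity along $(n_l)_l$'' is immediate and needs no hull theory: if $(\COMPCPHI{n_{l_k}})_k$ is hereditarily universal it is in particular universal, so it has a universal element $f$, and since $\lbrace\COMPCPHI{n_l}(f):l\in\NN\rbrace\supseteq\lbrace\COMPCPHI{n_{l_k}}(f):k\in\NN\rbrace$ this $f$ is a fortiori universal for $(\COMPCPHI{n_l})_l$.

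For the converse, assume $C_\varphi$ is hypercyclic w.r.t.\ $(n_l)_l$; then $\varphi$ is injective by Proposition~\ref{prop_kon_u}, and by Remark~\ref{rem_char_przel_rodz_wycz} the conditions of Theorem~\ref{th_char} may be tested on a fixed exhaustion $(K_m)_m$ of $\Omega$ by $\Omega$-convex compacta. For $m\in\NN$ I set
\[
A_m:=\bigl\lbrace l\in\NN:\ K_m\cap\COMPPHI{n_l}(K_m)=\varnothing\ \text{and}\ K_m\cup\COMPPHI{n_l}(K_m)\ \text{is}\ \Omega\text{-convex}\bigr\rbrace ,
\]
which is non-empty for every $m$ by Theorem~\ref{th_char}\ref{th_char_u}. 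The heart of the argument is the pair of elementary facts that (i) $A_{m+1}\subseteq A_m$ for every $m$, and (ii) $\bigcap_m A_m=\varnothing$. Granting them, no $A_m$ can be finite: if $A_{m_0}=\lbrace l^{(1)},\dots,l^{(r)}\rbrace$ were finite, then (ii) lets one choose $m_j$ with $l^{(j)}\notin A_{m_j}$, and for $M:=\max\lbrace m_0,m_1,\dots,m_r\rbrace$ the set $A_M$ is contained in $A_{m_0}$ yet avoids every $l^{(j)}$, hence is empty, contradicting Theorem~\ref{th_char}\ref{th_char_u}. So each $A_m$ is infinite and one may choose inductively $l_1<l_2<\cdots$ with $l_k\in A_k$; then for each fixed $m$ and every $k\geq m$ one has $l_k\in A_k\subseteq A_m$ by (i), and Theorem~\ref{th_char}\ref{th_char_hu} — applied to the sequence $(n_{l_k})_k$, again reduced to the exhaustion through Remark~\ref{rem_char_przel_rodz_wycz} — says precisely that $C_\varphi$ is hereditarily hypercyclic w.r.t.\ $(n_{l_k})_k$.

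It remains to prove (i) and (ii). Fact (ii) is quick: an element $l^*\in\bigcap_m A_m$ would satisfy $K_m\cap\COMPPHI{n_{l^*}}(K_m)=\varnothing$ for all $m$, but given any $z\in\Omega$ one can take $m$ so large that $z$ and $\COMPPHI{n_{l^*}}(z)$ both lie in $K_m$ (possible because $\COMPPHI{n_{l^*}}$ maps $\Omega$ into itself and $(K_m)$ exhausts $\Omega$), which is a contradiction. Fact (i) carries the real content and is the separation argument already used in Remark~\ref{rem_char_przel_rodz_wycz}: for $l\in A_{m+1}$ the set $K_{m+1}\cup\COMPPHI{n_l}(K_{m+1})$ is $\Omega$-convex with disjoint, individually $\Omega$-convex pieces (cf.\ Remark~\ref{rem_runge}; alternatively Lemma~\ref{lemat_otoczki_2} for the larger set), so Lemma~\ref{lemat_otoczki_1} (direction \ref{lem_ot_suma}$\Rightarrow$\ref{lem_ot_funkcja}) produces $F\in\OO(\Omega)$ with $\HULLP{F(K_{m+1})}\cap\HULLP{F(\COMPPHI{n_l}(K_{m+1}))}=\varnothing$; since $K_m\subseteq K_{m+1}$ and hence $\COMPPHI{n_l}(K_m)\subseteq\COMPPHI{n_l}(K_{m+1})$, monotonicity of polynomial hulls shows that the same $F$ separates the disjoint $\Omega$-convex compacta $K_m$ and $\COMPPHI{n_l}(K_m)$, so Lemma~\ref{lemat_otoczki_1} (direction \ref{lem_ot_funkcja}$\Rightarrow$\ref{lem_ot_suma}) yields that $K_m\cup\COMPPHI{n_l}(K_m)$ is $\Omega$-convex, i.e.\ $l\in A_m$. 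I expect this invocation of both implications of Lemma~\ref{lemat_otoczki_1} together with hull monotonicity — and keeping straight that the disjointness hypotheses of the lemma are met in both applications — to be the only delicate point; everything else is bookkeeping.
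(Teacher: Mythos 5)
Your proposal is correct and follows essentially the same route as the paper: test the conditions of Theorem~\ref{th_char} on an exhaustion by $\Omega$-convex compacta, choose for each $K_m$ an admissible index, make the choice increasing, and invoke part \ref{th_char_hu} via the separation argument of Remark~\ref{rem_char_przel_rodz_wycz}. The only difference is that you spell out (via the nestedness of the sets $A_m$ and the emptiness of their intersection) why an increasing choice of indices is possible, a point the paper compresses into ``we may assume that the sequence $(l_\mu)_\mu$ is increasing.''
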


In many 'nice' domains $\Omega$ there holds a stronger implication than the above: for any $\varphi\in\OO(\Omega,\Omega)$ hypercyclicity of $C_\varphi$ implies its hereditary hypercyclicity, i.e. the conditions \ref{probl_u} and \ref{probl_hu} are equivalent.
We deal with this topic in Section \ref{sect_hu}.

\begin{proof}
Indeed, assume that $C_\varphi$ is hypercyclic w.r.t. $(n_l)_l$ and let $(K_\mu)_\mu$ be a sequence of $\Omega$-convex compact sets which exhausts $\Omega$.
By Theorem \ref{th_char_2}, for each $\mu$ there is $l_\mu$ such that the sets $K_\mu$ and $\COMPPHI{n_{l_\mu}}(K_\mu)$ are separable in $\Omega$.
We may assume that the sequence $(l_\mu)_\mu$ is increasing.
We claim that $C_\varphi$ is hereditarily hypercyclic w.r.t. $(n_{l_\mu})_\mu$.
If $K\subset\Omega$ is compact, then there is $\mu_0$ such that $K$ is contained in $K_\mu$ for every $\mu\geq\mu_0$, what gives that the sets $K$ and $\COMPPHI{n_{l_\mu}}(K)$ are separable in $\Omega$.
\end{proof}

\begin{THE}\label{the_carat_do_jedynki}
Let $\Omega$ be a connected Stein manifold, $\varphi\in\OO(\Omega,\Omega)$ and let $(n_l)_l\in\NN$ be an increasing sequence.
Suppose that $\varphi$ is injective and that $\varphi(\Omega)$ is a Runge domain w.r.t. $\Omega$.
If there exists a point $z_0\in\Omega$ such that $$\lim_{l\to\infty}c_{\Omega}(z_0,\COMPPHI{n_l}(z_0))=\infty,$$
then the operator $C_\varphi$ is hereditarily hypercyclic w.r.t. $(n_l)_l$.
\end{THE}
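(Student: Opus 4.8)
\emph{Proof plan.} The plan is to check the hull criterion of Corollary~\ref{cor_char_2}, \ref{cor_char_2_hu}. Two of its three ingredients, injectivity of $\varphi$ and $\varphi(\Omega)$ being a Runge domain w.r.t. $\Omega$, are granted by assumption, so what remains is to show: for every $\Omega$-convex compact $K\subset\Omega$ there is an index $l_0$ such that for each $l\geq l_0$ one can find $F\in\OO(\Omega,\DD)$ with $\HULLP{F(K)}\cap\HULLP{F(\COMPPHI{n_l}(K))}=\varnothing$. By Remark~\ref{rem_char_przel_rodz_wycz} it is enough to do this along some exhaustion of $\Omega$ by $\Omega$-convex compacta, and after replacing each member $K$ by $\HULLO{K\cup\{z_0\}}$ (still compact and $\Omega$-convex, and containing $K$) I may assume $z_0\in K$.

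The point is to control the \emph{whole} set $K$, not just the orbit point $z_0$, by means of the Carath\'{e}odory pseudodistance. Recall (see \cite{jarnickipflug}) that $c_\Omega$ is continuous and finite on $\Omega\times\Omega$ and is contracted by holomorphic maps; in particular $c_\DD(F(z),F(w))\leq c_\Omega(z,w)$ for every $F\in\OO(\Omega,\DD)$, and $c_\Omega(\COMPPHI{n_l}(z),\COMPPHI{n_l}(w))\leq c_\Omega(z,w)$. Put $d:=\sup_{z,w\in K}c_\Omega(z,w)<\infty$. Applying the contraction property to $\COMPPHI{n_l}$ then gives $c_\Omega(\COMPPHI{n_l}(z_0),w')\leq d$ for every $w'\in\COMPPHI{n_l}(K)$. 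Next, using the hypothesis $c_\Omega(z_0,\COMPPHI{n_l}(z_0))\to\infty$ — equivalently $c_\Omega^*(z_0,\COMPPHI{n_l}(z_0))\to1$ — I would choose, for each $l$, a function $F_l\in\OO(\Omega,\DD)$ with $F_l(z_0)=0$ and $|a_l|\to1$, where $a_l:=F_l(\COMPPHI{n_l}(z_0))$.

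With this choice the contraction property gives $F_l(K)\subset\{w\in\DD:c_\DD(0,w)\leq d\}=\{\,|w|\leq r\,\}$, a closed disc of fixed radius $r:=\tanh(d/2)<1$ not depending on $l$, and $F_l(\COMPPHI{n_l}(K))\subset B_l:=\{w\in\DD:c_\DD(a_l,w)\leq d\}$. Since $c_\DD(0,w)\geq c_\DD(0,a_l)-d$ for $w\in B_l$ and the right-hand side tends to $\infty$, we get $\inf_{w\in B_l}|w|\to1$, so $B_l\cap\{\,|w|\leq r\,\}=\varnothing$ for all large $l$. Finally one passes to polynomial hulls in $\CC$: the disc $\{\,|w|\leq r\,\}$ is polynomially convex, hence $\HULLP{F_l(K)}\subset\{\,|w|\leq r\,\}$; and $B_l$, being a $c_\DD$-ball in $\DD$, is a Euclidean disc — explicitly $B_l=\{w:|(w-a_l)/(1-\overline{a_l}w)|\leq\tanh(d/2)\}$ — hence convex and polynomially convex, so $\HULLP{F_l(\COMPPHI{n_l}(K))}\subset B_l$. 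Thus for $l$ large these two hulls are disjoint, $F:=F_l$ does the job, and Corollary~\ref{cor_char_2}, \ref{cor_char_2_hu} yields that $C_\varphi$ is hereditarily hypercyclic w.r.t. $(n_l)_l$.

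I expect the only genuinely delicate step to be this last passage to hulls over $\CC$: since the operation $\HULLP{\,\cdot\,}$ fills in the bounded complementary components, one must be sure that $F_l(\COMPPHI{n_l}(K))$ does not surround a region sitting well inside $\DD$, and it is precisely the squeezing of this set into the shrinking near-boundary disc $B_l$ that rules this out. Everything else — the distance estimates, the identification of $B_l$ as a Euclidean disc, and the reduction to the case $z_0\in K$ — should be routine.
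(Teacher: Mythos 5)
Your proof is correct, and it reaches the same endpoint (the separation criterion of Corollary~\ref{cor_char_2}) by a genuinely different mechanism than the paper. The paper first observes that every subsequence of $(n_l)_l$ inherits the hypothesis, so it suffices to prove plain hypercyclicity; it then takes extremal functions $F_l$ with $F_l(z_0)=0$, $F_l(\COMPPHI{n_l}(z_0))\to 1$, and uses Montel plus the maximum principle to extract limits $F_l\to H$ with $H(\Omega)\subset\DD$ and $F_l\circ\COMPPHI{n_l}\to 1$ locally uniformly; this gives, for a fixed $\Omega$-convex $K$ and large $l$, the inclusions $F_l(K)\subset\alpha\DD$ and $F_l(\COMPPHI{n_l}(K))\subset 1+(1-\alpha)\DD$, and Corollary~\ref{cor_char_2} finishes. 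You instead avoid normal families altogether: the Schwarz--Pick contraction of $c_\Omega$ under $\COMPPHI{n_l}$ and under $F_l$ traps $F_l(K)$ in the fixed disc $\{|w|\le\tanh(d/2)\}$ and $F_l(\COMPPHI{n_l}(K))$ in the hyperbolic ball $B_l$ of radius $d$ about $a_l$, which escapes to $\partial\DD$; since $B_l$ is a Euclidean disc, hence convex and polynomially convex, the hulls $\HULLP{F_l(K)}$ and $\HULLP{F_l(\COMPPHI{n_l}(K))}$ are disjoint for all $l\ge l_0$ with an explicit $l_0$ (namely $c_\DD(0,a_l)>2d$), so you verify the hereditary condition of Corollary~\ref{cor_char_2}, \ref{cor_char_2_hu} directly, without the subsequence reduction. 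What each buys: the paper's argument is shorter and softer (no metric estimates, no identification of Carath\'eodory balls in $\DD$), while yours is quantitative, gives a uniform $l_0$ depending only on the Carath\'eodory diameter of $K$, and makes explicit the point both proofs rely on implicitly --- that the target regions are convex, so taking polynomial hulls cannot fill in a hole reaching back into the interior of $\DD$. Your auxiliary steps (finiteness and continuity of $c_\Omega$ on $K\times K$, the reduction to $z_0\in K$ via $\HULLO{K\cup\{z_0\}}$, the formula $B_l=\{w:|(w-a_l)/(1-\overline{a_l}w)|\le\tanh(d/2)\}$) are all correct.
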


The assumption $\lim_{l\to\infty}c_{\Omega}(z_0,\COMPPHI{n_l}(z_0))=\infty$ is fulfilled e.g. if there exists a function $F\in\OO(\Omega,\DD)$ such that $|F\circ\COMPPHI{n_l}(z_0)|\to 1 \textnormal{ as }l\to\infty$.

\begin{proof}
Since every subsequence of $(n_l)_l$ satisfy the same assumptions, it suffices to prove hypercyclicity.
The limit condition in the assumptions means that there exists a sequence $(F_l)_l\subset\OO(\Omega,\DD)$ such that $F_l(z_0)=0$ and $F_l(\COMPPHI{n_l}(z_0))\to 1$ as $l\to\infty$.
Using the Montel theorem and passing to a subsequence we may assume that $F_l\circ\COMPPHI{n_l}\to G$ and $F_l\to H$ for some functions $G,H\in\OO(\Omega)$ with $G(\Omega), H(\Omega)\subset\CDD$.
Since $G(z_0)=1$ and $H(z_0)=0$, if follows from the maximum principle that $G\equiv 1$ and $H(\Omega)\subset\DD$.

Fix a compact $\Omega$-convex subset $K\subset\Omega$.
There is an $\alpha\in(0,1)$ so that $H(K)\subset\alpha\DD$, so for big $l$ there holds $F_l(K)\subset\alpha\DD$.
On the other hand, $F_l(\COMPPHI{n_l}(K))\subset 1+(1-\alpha)\DD$, because $F_l\circ\COMPPHI{n_l}\to 1$.
This implies that for big $l$ the function $F_l$ separates $K$ and $\COMPPHI{n_l}(K)$.
Now apply Theorem \ref{th_char_2}.
\end{proof}

For a natural number $M$ introduce the operator $$C_{\varphi, M}:\OO(\Omega,\CC^M)\ni f\mapsto f\circ\varphi\in\OO(\Omega,\CC^M).$$
It turns out that if $C_\varphi$ is hypercyclic w.r.t. $(n_l)_l$, then every $C_{\varphi, M}$ is hypercyclic w.r.t $(n_l)_l$ (Corollary \ref{cor_C_fi_M}).
But here, it is interesting that making use of some classic results on Stein manifolds, we can show that for $M\geq 2N+1$ most (i.e. a dense $\mathcal{G}_\delta$ subset) of universal elements of the sequence $(\COMP{C_{\varphi,M}}{n_l})_l$ have nice properties: they are injective, regular, almost proper holomorphic maps from $\Omega$ to $\CC^M$ (Corollary \ref{cor_C_fi_M_universal_elements}).

Obviously, if for some $M$ the operator $C_{\varphi,M}$ is hypercyclic (resp. hereditarily hypercyclic) w.r.t. $(n_l)_l$, then $C_\varphi$ is hypercyclic (resp. hereditarily hypercyclic) w.r.t. $(n_l)_l$, as any universal element $f=(f_1,\ldots,f_M)$ for $(\COMP{C_{\varphi,M}}{n_l})_l$ gives universal elements $f_1,\ldots,f_M$ for $(\COMPCPHI{n_l})_l$.

Note that in general a proper map $f:\Omega\to\CC^M$ cannot be a universal element for $(\COMP{C_{\varphi,M}}{n_l})_l$.
For example, take $\Omega$ and $\varphi$ such that $C_\varphi$ is hypercyclic and the sequence $(\COMPPHI{n})_n$ is compactly divergent in $\OO(\Omega,\Omega)$.
If $f:\Omega\to\CC^M$ is holomorphic and proper, then $f\circ\COMPPHI{n}\to\infty$ compactly uniformly on $\Omega$, i.e. $\inf_{K}|f\circ\COMPPHI{n}|\to\infty$ for each compact set $K\subset\Omega$, so constant mappings cannot be approximated by maps of the form $f\circ\COMPPHI{n}$.

\begin{CRL}\label{cor_C_fi_M}
Let $\Omega$ be a connected Stein manifold, $\varphi\in\OO(\Omega,\Omega)$ and let $(n_l)_l\in\NN$ be an increasing sequence.

If $C_\varphi$ is hypercyclic (resp. hereditarily hypercyclic) w.r.t. $(n_l)_l$, then $C_{\varphi,M}$ is hypercyclic (resp. hereditarily hypercyclic) w.r.t. $(n_l)_l$ for every $M$.
\end{CRL}

\begin{proof}
It is enough to consider the case of hypercyclicity.
Obviously $\varphi$ is injective.
In view of Corollary \ref{cor_hc_tt}, it suffices to show that the sequence $(\COMP{C_{\varphi, M}}{n_l})_l)$ is topologically transitive.

Fix a number $\epsilon>0$, a compact $\Omega$-convex subset $K\subset\Omega$ and functions $g_1,\ldots,g_M$, $h_1,\ldots,h_M\in\OO(\Omega)$.
By Theorem \ref{th_char}, there is $l\in\NN$ such that the sets $K$ and $\COMPPHI{n_l}(K)$ are disjoint and their sum is $\Omega$-convex.
We need to show that there exist functions $f_1,\ldots,f_M\in\OO(\Omega)$ such that for $j=1,\ldots,M$ there is
$$|f_j-g_j|<\epsilon\textnormal{ on }K\textnormal{ and }|f_j-h_j\circ\COMPPHI{-n_l}|<\epsilon\textnormal{ on }\COMPPHI{n_l}(K).$$
But, as in the proof of Theorem \ref{th_char}, this follows from the fact that the functions $\widetilde{f_j}$ defined as $g_j$ in a neighborhood of $K$ and as $h_j\circ\COMPPHI{-n_l}$ in a neighborhood of $\COMPPHI{n_l}(K)$ can be approximated on the $\Omega$-convex set $K\cup\COMPPHI{n_l}(K)$ by functions holomorphic on $\Omega$.
\end{proof}

\begin{CRL}\label{cor_C_fi_M_universal_elements}
Let $\Omega$ be a connected $N$-dimensional Stein manifold, $\varphi\in\OO(\Omega,\Omega)$ and let $(n_l)_l\in\NN$ be an increasing sequence.
Assume that $C_\varphi$ is hypercyclic w.r.t. $(n_l)_l$.
Then:
\begin{enumerate}
\renewcommand{\theenumi}{(\arabic{enumi})}
\renewcommand{\labelenumi}{\theenumi}
\item For $M\geq N$, the set of all almost proper holomorphic maps $f:\Omega\to\CC^M$, which are universal elements for $(\COMP{C_{\varphi,M}}{n_l})_l$ contains a dense $\mathcal{G}_\delta$ subset of $\OO(\Omega,\CC^M)$.
\item For $M\geq 2N$, the set of all regular almost proper holomorphic maps $f:\Omega\to\CC^M$, which are universal elements for $(\COMP{C_{\varphi,M}}{n_l})_l$ contains a dense $\mathcal{G}_\delta$ subset of $\OO(\Omega,\CC^M)$.
\item For $M\geq 2N+1$, the set of all injective regular almost proper holomorphic maps $f:\Omega\to\CC^M$, which are universal elements for $(\COMP{C_{\varphi,M}}{n_l})_l$ contains a dense $\mathcal{G}_\delta$ subset of $\OO(\Omega,\CC^M)$.
\end{enumerate}
\end{CRL}

\begin{proof}
In view of Corollary \ref{cor_C_fi_M}, every $C_{\varphi,M}$ is hypercyclic w.r.t. $(n_l)_l$.
By \cite[Theorem 8.1.1]{forstneric}, if $M\geq N$, then the set of all almost proper holomorphic maps $f:\Omega\to\CC^M$ contains a dense $\mathcal{G}_\delta$ set.
By \cite[Theorem 5.3.6]{hormander}, if $M\geq 2N$ (resp. $M\geq 2N+1$), then the set of all regular holomorphic maps (resp. of all injective regular holomorphic maps) $f:\Omega\to\CC^M$ contains a dense $\mathcal{G}_\delta$ set.
On the other hand, by Theorem \ref{th_birkhoff} and Corollary \ref{cor_hc_tt}, the set of all universal elements for $(\COMP{C_{\varphi,M}}{n_l})_l$ is a dense $\mathcal{G}_\delta$ set.
An intersection of finitely many dense $\mathcal{G}_\delta$ subsets of $\OO(\Omega,\CC^M)$ is a dense $\mathcal{G}_\delta$ set, in view of the Baire theorem.
\end{proof}

Given a sequence $(\varphi_l)_l$ of holomorphic self-maps of a domain $\Omega\subset\CC^N$, we say that a sequence $(C_{\varphi_l})_l$ is $\BBB$-universal if the sequence $(C_{\varphi_l}|_{\BBB(\Omega)})_l$ is universal as a sequence of mappings from $\BBB(\Omega)$ to $\BBB(\Omega)$, where $\BBB(\Omega):=\OO(\Omega,\DD)$, endowed with the topology induced from $\OO(\Omega)$.
In \cite[Theorem 3.5 and Corollary 3.7]{gorkin} and \cite[Theorem 2.4]{grosseerdmann_mortini} the authors proved that, under certain assumptions (e.g. that bounded analytic functions are dense in $\OO(\Omega)$), $\BBB$-universality of sequences of composition operators $(C_{\varphi_l})_l$ implies its universality, where $\varphi_l$ are holomorphic self-maps of $\Omega$.
Restricting to the case of iterations, i.e. $\varphi_l=\COMPPHI{n_l}$, and to $\Omega$ being a domain of holomorphy in $\CC^N$, we can state a similar conclusion with less assumptions:

\begin{CRL}
Let $\Omega\subset\CC^N$ be a domain of holomorphy, $\varphi\in\OO(\Omega,\Omega)$ and let $(n_l)_l\in\NN$ be an increasing sequence.
Suppose that $\varphi$ is injective and that $\varphi(\Omega)$ is a Runge domain w.r.t. $\Omega$.
If the sequence $(\COMPCPHI{n_l})_l$ is $\BBB$-universal, then it is universal, i.e. the operator $C_\varphi$ is hypercyclic w.r.t. $(n_l)_l$.
\end{CRL}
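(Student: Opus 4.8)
The plan is to unwind the definition of $\BBB$-universality and then invoke the criterion for hypercyclicity provided by Corollary~\ref{cor_char_2}\ref{cor_char_2_u}. By hypothesis the sequence $(\COMPCPHI{n_l}|_{\BBB(\Omega)})_l$ possesses a universal element, i.e.\ there is a function $f\in\BBB(\Omega)=\OO(\Omega,\DD)$ for which the orbit $\lbrace f\circ\COMPPHI{n_l}:l\in\NN\rbrace$ is dense in $\OO(\Omega,\DD)$. Since $\varphi$ is injective and $\varphi(\Omega)$ is a Runge domain with respect to $\Omega$ (these are among the hypotheses), by Corollary~\ref{cor_char_2}\ref{cor_char_2_u} it suffices to check that for every $\Omega$-convex compact $K\subset\Omega$ there exist $l\in\NN$ and $F\in\OO(\Omega)$ such that $\HULLP{F(K)}\cap\HULLP{F(\COMPPHI{n_l}(K))}=\varnothing$. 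I would take $F:=f$ for \emph{every} $K$, so the whole content of the proof is the claim that the single $\BBB$-universal function $f$ already separates $K$ from $\COMPPHI{n_l}(K)$ for a suitable $l$.

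First I would fix an $\Omega$-convex compact $K\subset\Omega$, which we may assume to be non-empty, and set $r:=\sup_K|f|$; then $r<1$, so $f(K)$ lies in the closed disc $r\CDD$. The point is that the values of $f$ on the \emph{fixed} set $K$ are automatically trapped strictly inside $\DD$, while density of the orbit lets us push the values on $\COMPPHI{n_l}(K)$ into a small disc near the unit circle. Concretely, I would choose a small $\epsilon>0$ and a constant $a$ with $|a|<1$ but $|a|-\epsilon>r$ (for instance $\epsilon:=(1-r)/4$ and $a:=1-2\epsilon$), note that $\lbrace h\in\OO(\Omega,\DD):|h-a|<\epsilon\textnormal{ on }K\rbrace$ is a non-empty open subset of $\OO(\Omega,\DD)$ (it contains $h\equiv a$), and use density of the orbit of $f$ to find $l\in\NN$ with $|f\circ\COMPPHI{n_l}-a|<\epsilon$ on $K$, i.e.\ with $f(\COMPPHI{n_l}(K))$ contained in the closed disc $\lbrace|w-a|\leq\epsilon\rbrace$. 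Since closed discs in $\CC$ are polynomially convex, $\HULLP{f(K)}\subset r\CDD$ and $\HULLP{f(\COMPPHI{n_l}(K))}\subset\lbrace|w-a|\leq\epsilon\rbrace$, and these two discs are disjoint by the choice $|a|-\epsilon>r$; then $F:=f$ and this $l$ work, and Corollary~\ref{cor_char_2}\ref{cor_char_2_u} completes the proof.

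I do not anticipate a real obstacle; the only mildly non-obvious step is the realisation just described, namely that no Baire-category or transitivity argument on the larger space $\OO(\Omega)$ is needed, because the $\BBB$-valued universal function does the separation directly. Should one prefer to argue through the results already at hand about the Carath\'{e}odory distance, an alternative is: normalise $f$ by a M\"{o}bius automorphism of $\DD$ so that $f(z_0)=0$ for some fixed $z_0\in\Omega$ (this preserves $\BBB$-universality, since post-composition with a disc automorphism is a homeomorphism of $\OO(\Omega,\DD)$), extract a strictly increasing subsequence $(l_k)_k$ with $f(\COMPPHI{n_{l_k}}(z_0))\to1$ (possible because $\lbrace f(\COMPPHI{n_l}(z_0)):l\in\NN\rbrace$ is dense in $\DD$), deduce $c_\Omega(z_0,\COMPPHI{n_{l_k}}(z_0))\to\infty$ from the remark following Theorem~\ref{the_carat_do_jedynki}, and apply that theorem to the sequence $(n_{l_k})_k$ to get hereditary hypercyclicity w.r.t.\ $(n_{l_k})_k$, hence hypercyclicity w.r.t.\ $(n_l)_l$. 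This variant requires only the routine observation that such a subsequence can be chosen strictly increasing.
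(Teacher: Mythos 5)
Your main argument is correct, and it takes a genuinely more direct route than the paper. The paper's proof picks an exhaustion $(K_\mu)_\mu$ and uses density of the orbit in $\BBB(\Omega)$ to find $l_\mu$ with $|f\circ\COMPPHI{n_{l_\mu}}-(1-\tfrac1\mu)|<\tfrac1\mu$ on $K_\mu$, so that $f\circ\COMPPHI{n_{l_\mu}}\to 1$ locally uniformly; it then invokes Remark \ref{rm_zb_do_jedynki} and Theorem \ref{the_carat_do_jedynki} (blow-up of the Carath\'eodory distance, whose proof runs through Montel's theorem and the maximum principle before landing back in Corollary \ref{cor_char_2}) to get hereditary hypercyclicity along $(n_{l_\mu})_\mu$, hence hypercyclicity w.r.t. $(n_l)_l$. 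You instead feed the $\BBB$-universal function $f$ itself into Corollary \ref{cor_char_2}\ref{cor_char_2_u}, one $\Omega$-convex compact $K$ at a time: $\sup_K|f|=r<1$ traps $\HULLZP{f(K)}$ in $r\CDD$, while density of the orbit in $\BBB(\Omega)$ (the target being the admissible constant $a\in(0,1)$) pushes $f(\COMPPHI{n_l}(K))$ into a closed disc around $a$ disjoint from $r\CDD$, and polynomial convexity of discs finishes the separation. This is more elementary — no normal families, no Carath\'eodory machinery — at the cost of not producing, as the paper's route does, hereditary hypercyclicity along an explicit subsequence (which is anyway equivalent to hypercyclicity by Observation \ref{obs_hu_subseq}). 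Your sketched alternative (normalizing $f$, extracting $l_k$ with values at $z_0$ tending to $1$, and applying Theorem \ref{the_carat_do_jedynki}) is essentially the paper's proof and is also sound; the strictly increasing choice of $(l_k)_k$ is indeed automatic, since finitely many values of $f\circ\COMPPHI{n_l}(z_0)$ stay away from $1$.
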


\begin{proof}
Let $f\in\BBB(\Omega)$ be a universal element for $(\COMPCPHI{n_l}|_{\BBB(\Omega)})_l$, and let $(K_\mu)_\mu$ be an exhaustion of $\Omega$.
For every $\mu$ there is some number $l_\mu$ such that $|f\circ\COMPPHI{n_{l_\mu}}-(1-\frac1\mu)|<\frac1\mu$ on $K_\mu$, and we may assume $l_{\mu+1}>l_\mu$.
This gives $f\circ\COMPPHI{n_{l_\mu}}\to 1$ on $\Omega$ as $\mu\to\infty$.
By Theorem \ref{the_carat_do_jedynki}, $C_\varphi$ is hereditarily hypercyclic w.r.t. $(n_{l_\mu})_\mu$ and hence hypercyclic w.r.t. $(n_l)_l$. 
\end{proof}

\subsection{Simple characterisation of hypercyclicity}\label{sect_simp_char}

Given two disjoint compact $\Omega$-convex sets, it is generally quite hard to check whether they are separable in $\Omega$ (or equivalently: whether their sum is $\Omega$-convex).
Such a problem appears in our characterisation of hypercyclicity, Theorem \ref{th_char_2}, for the sets $K$ and $\COMPPHI{n_l}(K)$.
On the other hand, in simply or infinitely connected planar domains a simpler description works: $C_\varphi$ is hypercyclic w.r.t. $(n_l)_l$ if and only if $\varphi$ is injective, $\varphi(\Omega)$ is a Runge domain w.r.t. $\Omega$ and $(\COMPPHI{n_l})_l$ is a run-away sequence, that is: if and only if the conditions \ref{pkon_u_1}, \ref{pkon_u_2}, \ref{pkon_u_3} are satisfied (see Section \ref{sect_one_dim}).
This in fact means that for such domains the condition that $K$ and $\COMPPHI{n_l}(K)$ are separable in $\Omega$ for some $l$ may be replaced by the condition that $K$ and $\COMPPHI{n_l}(K)$ are disjoint for some $l$ (not necessarily the same), as the last statement is just the run-away property of the sequence $(\COMPPHI{n_l})_l$.
In this section we give a class of Stein manifolds in which these conditions (which are necessary in general - Proposition \ref{prop_kon_u}) become sufficient for hypercyclicity.

\begin{DEF}\label{def_hyp_domain}
Let $\mathcal{S}$ denote the class of all connected Stein manifolds satisfynig the following condition: for every mapping $\varphi\in\OO(\Omega,\Omega)$ and every increasing sequence $(n_l)_l$, if the conditions \ref{pkon_u_1}, \ref{pkon_u_2}, \ref{pkon_u_3} are fulfilled, then the operator $C_\varphi$ is hypercyclic w.r.t. $(n_l)_l$.
\end{DEF}

If $\Omega\in\mathcal{S}$, then the conditions \ref{pkon_u_1}, \ref{pkon_u_2}, \ref{pkon_u_3} are sufficient for hypercyclicity.
But let us note that it is not needed to consider $\Omega$'s in which 'the conditions \ref{pkon_hu_1}, \ref{pkon_hu_2}, \ref{pkon_hu_3} are sufficient for hereditary hypercyclicity', because a connected Stein manifold $\Omega$ admits this property if and only if it belongs to $\mathcal{S}$.
This fact is a direct consequence of the relations between run-away and compactly divergent sequences: a sequence $(\COMPPHI{n_l})_l$ is run-away if and only if it has a compactly divergent subsequence, and it is compactly divergent if and only if each of its subsequences is run-away (the assumption that $\Omega$ is countable at infinity is needed here).

The following theorem presents some subclass of the class $\mathcal{S}$:

\begin{THE}\label{th_carat_conv}
If $\Omega$ is a connected Stein manifold for which there exists a point $z_0\in\Omega$ so that $$\lim_{\Omega\ni z\to\UZWIO}c_{\Omega}(z,z_0)=\infty,$$ then $\Omega$ belongs to the class $\mathcal{S}$.
\end{THE}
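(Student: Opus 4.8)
The plan is to apply Theorem~\ref{the_carat_do_jedynki} after extracting a suitable subsequence, and then to push topological transitivity from that subsequence up to the whole sequence.

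Let $\varphi\in\OO(\Omega,\Omega)$ and let $(n_l)_l$ be an increasing sequence satisfying the necessary conditions \ref{pkon_u_1}, \ref{pkon_u_2}, \ref{pkon_u_3} of Proposition~\ref{prop_kon_u}; I must show that $C_\varphi$ is hypercyclic w.r.t.\ $(n_l)_l$. First I would unwind the hypothesis on $\Omega$: the statement $\lim_{\Omega\ni z\to\UZWIO}c_\Omega(z,z_0)=\infty$ means precisely that for every $M>0$ there is a compact set $C\subset\Omega$ with $c_\Omega(z,z_0)>M$ for all $z\in\Omega\setminus C$. By \ref{pkon_u_3} the sequence $(\COMPPHI{n_l})_l$ is run-away, so, by the equivalence recalled in Section~\ref{sect_preliminaries}, it admits a subsequence $(\COMPPHI{n_{l_k}})_k$ which is compactly divergent in $\OO(\Omega,\Omega)$. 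Applying compact divergence to the one-point compact set $\{z_0\}$ shows that $\COMPPHI{n_{l_k}}(z_0)$ eventually leaves every compact subset of $\Omega$, i.e.\ $\COMPPHI{n_{l_k}}(z_0)\to\UZWIO$ in $\UZWO$; combined with the reformulated hypothesis this gives $c_\Omega\bigl(z_0,\COMPPHI{n_{l_k}}(z_0)\bigr)\to\infty$ as $k\to\infty$.

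Next I would apply Theorem~\ref{the_carat_do_jedynki} to the sequence $(n_{l_k})_k$: by \ref{pkon_u_1} the map $\varphi$ is injective, by \ref{pkon_u_2} the image $\varphi(\Omega)$ is a Runge domain w.r.t.\ $\Omega$, and the Carath\'{e}odory limit condition at the point $z_0$ has just been verified for this subsequence, so $C_\varphi$ is hereditarily hypercyclic --- in particular hypercyclic --- w.r.t.\ $(n_{l_k})_k$. By Corollary~\ref{crl_univ_and_tt} this is equivalent to topological transitivity of the sequence $(\COMPCPHI{n_{l_k}})_k$.

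Finally, topological transitivity passes from a subsequence to the full sequence: given non-empty open sets $U,V\subset\OO(\Omega)$, an index $k$ with $\COMPCPHI{n_{l_k}}(U)\cap V\neq\varnothing$ furnishes an admissible index $n_{l_k}$ occurring in the list $(n_l)_l$. Hence $(\COMPCPHI{n_l})_l$ is topologically transitive, and another application of Corollary~\ref{crl_univ_and_tt} yields that $C_\varphi$ is hypercyclic w.r.t.\ $(n_l)_l$; thus \ref{pkon_u_1}, \ref{pkon_u_2}, \ref{pkon_u_3} are sufficient for hypercyclicity in $\Omega$, i.e.\ $\Omega$ is hypercyclic. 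The only delicate point is this last step, the subsequence-to-sequence passage: it is legitimate precisely because, by Birkhoff's transitivity theorem as packaged in Corollary~\ref{crl_univ_and_tt}, hypercyclicity of $C_\varphi$ w.r.t.\ $(n_l)_l$ is equivalent to a transitivity property that is automatically inherited upward from subsequences --- something that is not at all obvious from the raw definition of a universal element.
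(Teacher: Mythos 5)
Your proposal is correct and follows essentially the same route as the paper: pass to a compactly divergent subsequence of $(\COMPPHI{n_l})_l$ via \ref{pkon_u_3}, deduce $c_\Omega(z_0,\COMPPHI{n_{l_k}}(z_0))\to\infty$, and apply Theorem~\ref{the_carat_do_jedynki}. The only (harmless) detour is your final passage from the subsequence to the full sequence via transitivity and Corollary~\ref{crl_univ_and_tt}; this is already immediate from the definition, since a universal element for $(\COMPCPHI{n_{l_k}})_k$ has its orbit contained in that of $(\COMPCPHI{n_l})_l$, which is therefore dense as well.
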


\noindent
The assumption $\lim_{\Omega\ni z\to\UZWIO}c_{\Omega}(z,z_0)=\infty$ means that all balls w.r.t. the Carath\'{e}odory pseudodistance are relatively compact in $\Omega$.
If it holds for some $z_0$, then it holds for every $z_0\in\Omega$ (this is an easy consequence of the triangle inequality).

Let us recall, for the readers who are not familiar with the Carath\'{e}odory pseudodistance, that the assumptions of the above theorem is fulfilled in particular by the following classes of bounded domains in $\CC^N$: convex domains, strictly pseudoconvex domains, domains for which each boundary point admits a weak peak function (i.e. for each $a\in\partial\Omega$ there is a holomorphic function $F:\Omega\to\DD$ with $\lim_{\Omega\ni z\to a}F(z)=1$), etc.
Since all these domains are Stein manifolds, in view of the above theorem they belong to $\mathcal{S}$.

For domains in $\CC$ belonging to $\mathcal{S}$ see Theorem \ref{th_char_w_c}.

\begin{proof}[Proof of Theorem \ref{th_carat_conv}]
Choose $\varphi$ and $(n_l)_l$ satisfying \ref{pkon_u_1}, \ref{pkon_u_2}, \ref{pkon_u_3}.
Since $(\COMPPHI{n_l})_l$ is run-away, passing to a subsequence we may assume that it is compactly divergent in $\OO(\Omega,\Omega)$.
We have $\COMPPHI{n_l}(z_0)\to\UZWIO$, so $c_\Omega(z_0,\COMPPHI{n_l}(z_0))\to\infty$.
Now Theorem \ref{the_carat_do_jedynki} does the job.
\end{proof}

\subsection{Hereditary hypercyclicity of $C_\varphi$}\label{sect_hu}

As it was said in Observation \ref{obs_hu_subseq}, hypercyclicity of $C_\varphi$ is equivalent to its hereditary hypercyclicity w.r.t. some increasing sequence $(n_l)_l\in\NN$.
But, as we shall see in this section, there are manifolds in which for every mapping $\varphi\in\OO(\Omega,\Omega)$ hypercyclicity of $C_\varphi$ gives its hereditary hypercyclicity, that is: the weakest of the conditions \ref{probl_u} - \ref{probl_hu} gives the strongest one.

Before we proceed, we need the notion of \textit{tautness}.
The reader can find general definition of taut manifold e.g. in \cite{abate}.
However, since we are interested only in connected Stein manifolds, for our purposes it suffices to restrict to the case when $\Omega$ is (biholomorphic to) a submanifold of some $\CC^M$.
This is by \cite[Theorem 5.3.9]{hormander}, which says that a $N$-dimensional Stein manifold $\Omega$ can be embedded in $\CC^{2N+1}$, i.e. there exists a regular injective proper holomorphic map $F:\Omega\to\CC^{2N+1}$.
In particular, $F(\Omega)$ is a submanifold of $\CC^{2N+1}$ and $F:\Omega\to F(\Omega)$ is a biholomorphism.
Given a finite-dimensional connected complex analytic manifold $\Omega'$, we endow the set $\OO(\Omega',F(\Omega))\subset\OO(\Omega',\CC^{2N+1})$ with the topology induced from $\OO(\Omega',\CC^{2N+1})$, and next we endow $\OO(\Omega',\Omega)$ with the topology carried by $F$ from $\OO(\Omega',F(\Omega))$.
One can prove that this topology does not depend on $F$.
We say that $\Omega$ is \textit{taut}, if every sequence $(f_n)_n\subset\OO(\DD,\Omega)$ is compactly divergent in $\OO(\DD,\Omega)$ or has a subsequence convergent in $\OO(\DD,\Omega)$.

Let us recall the following fact (see \cite[Theorem 2.4.3]{abate}):

\begin{THE}\label{th_abate_rel_comp}
Let $\Omega$ be a connected taut Stein manifold, and let $\varphi\in\OO(\Omega,\Omega)$.
If the sequence $(\COMPPHI{n})_n$ is not compactly divergent in $\OO(\Omega,\Omega)$, then it is relatively compact in $\OO(\Omega,\Omega)$.
\end{THE}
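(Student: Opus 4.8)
The plan is to deduce everything from the normal-family characterisation of tautness: $\Omega$ is taut precisely when $\OO(\Omega,\Omega)$ is a normal family, i.e.\ every sequence $(\psi_k)_k\subset\OO(\Omega,\Omega)$ has a subsequence that either converges locally uniformly to an element of $\OO(\Omega,\Omega)$ or is compactly divergent in $\OO(\Omega,\Omega)$; by Barth's theorem the same dichotomy holds for $\OO(M,\Omega)$ for every complex manifold $M$, and we apply it with $M=\Omega$. Two elementary consequences will be used constantly. First, ``$(\COMPPHI{n})_n$ is relatively compact in $\OO(\Omega,\Omega)$'' is equivalent to ``no subsequence of $(\COMPPHI{n})_n$ is compactly divergent'', and ``$(\COMPPHI{n})_n$ is not compactly divergent'' is equivalent to ``some subsequence $(\COMPPHI{n_k})_k$ converges locally uniformly to some $g\in\OO(\Omega,\Omega)$'' (for the latter: not being compactly divergent yields compact $K,L\subset\Omega$ and infinitely many $n$ with $\COMPPHI{n}(K)\cap L\neq\varnothing$, and that subsequence admits no compactly divergent sub-subsequence, hence by tautness a convergent one). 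Thus the statement reduces to: assuming $\COMPPHI{n_k}\to g\in\OO(\Omega,\Omega)$ locally uniformly, rule out that some further subsequence $(\COMPPHI{m_j})_j$ is compactly divergent.

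Second, I would record the composition lemma: if $f_j\to f$ locally uniformly with $f\in\OO(\Omega,\Omega)$ and $(g_j)_j\subset\OO(\Omega,\Omega)$ is compactly divergent, then $(g_j\circ f_j)_j$ is compactly divergent (given compact $C,D\subset\Omega$, the set $f_j(C)$ eventually lies in a fixed compact neighbourhood $L$ of $f(C)$, and $g_j(L)\cap D=\varnothing$ for large $j$); moreover composition $\OO(\Omega,\Omega)\times\OO(\Omega,\Omega)\to\OO(\Omega,\Omega)$ is continuous. Now suppose $(\COMPPHI{m_j})_j$ is compactly divergent. Fix $j$; for every $k$ with $n_k>m_j$ we have $\COMPPHI{n_k}=\COMPPHI{n_k-m_j}\circ\COMPPHI{m_j}$. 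If $(\COMPPHI{n_k-m_j})_k$ had a compactly divergent subsequence, the lemma (with the inner map $\COMPPHI{m_j}$ constant in $k$) would force the corresponding subsequence of $(\COMPPHI{n_k})_k$ to be compactly divergent, contradicting $\COMPPHI{n_k}\to g\in\OO(\Omega,\Omega)$. Hence, by tautness, $(\COMPPHI{n_k-m_j})_k$ has a subsequence converging to some $\rho_j\in\OO(\Omega,\Omega)$, and passing to the limit in the identity $\COMPPHI{n_k}=\COMPPHI{n_k-m_j}\circ\COMPPHI{m_j}$ gives $g=\rho_j\circ\COMPPHI{m_j}$; this holds for all sufficiently large $j$.

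It remains to contradict the family of factorisations $g=\rho_j\circ\COMPPHI{m_j}$ against the compact divergence of $(\COMPPHI{m_j})_j$: fixing $z_0\in\Omega$, the points $\COMPPHI{m_j}(z_0)$ leave every compact subset of $\Omega$, while $\rho_j$ sends them back to the \emph{fixed} point $g(z_0)\in\Omega$. I expect this to be the genuine obstacle, since $\rho_j$ varies with $j$, so no contradiction is immediate; resolving it requires the structure theory of the iterate family on a taut manifold. Concretely, the same composition argument applied to the ``gap'' sequence $m_k:=n_{k+1}-n_k$ shows $\COMPPHI{m_k}\to\gamma\in\OO(\Omega,\Omega)$ along a subsequence with $\gamma\circ g=g$; bootstrapping this one shows that $g$ may be taken to be a biholomorphic automorphism of a holomorphic retract $M=g(\Omega)$ of $\Omega$, that each $\rho_j$ restricts to a bijection of $M$, and that the set of $n$ for which $\COMPPHI{n}(z_0)$ lies in one fixed compact set is syndetic. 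Any of these closes the gap — e.g.\ from $g=\rho_j\circ\COMPPHI{m_j}$ with $\rho_j|_M$ bijective one gets $\COMPPHI{m_j}(M)\subset\rho_j^{-1}(g(M))$ trapped in a fixed compact set, contradicting compact divergence. In the write-up I would either carry this structural step out in full or, as is customary for a recalled fact, simply invoke \cite[Theorem 2.4.3]{abate}, where the complete argument appears.
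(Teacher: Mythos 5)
The paper does not actually prove this statement: it is recalled as a known fact and attributed to \cite[Theorem 2.4.3]{abate}, which is precisely the fallback you end with, so your proposal coincides with the paper's treatment. Your preliminary reductions are sound — the tautness dichotomy, the composition lemma, the factorisation giving $g=\rho_j\circ\COMPPHI{m_j}$, and the gap-sequence limit $\gamma$ with $\gamma\circ g=g$ — and you correctly identify that the remaining step is the real content (Abate's limit-retraction structure). One caution should you ever try to make the sketch self-contained: the proposed shortcut ``$\COMPPHI{m_j}(M)\subset\rho_j^{-1}(g(M))$ is trapped in a fixed compact set'' does not work as stated, since $\rho_j^{-1}(g(M))$ is the preimage of a compact set under a holomorphic self-map, hence in general neither compact nor independent of $j$; bijectivity of $\rho_j|_M$ controls what happens on $M$, not preimages in $\Omega$. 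For the purposes of this paper, simply invoking \cite[Theorem 2.4.3]{abate}, as the text does, is the intended route.
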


\noindent

We prove the following:

\begin{THE}\label{th_taut_and_conv_impl_hu}
Let a manifold $\Omega\in\mathcal{S}$ be taut, and let $\varphi\in\OO(\Omega,\Omega)$.
If the operator $C_\varphi$ is hypercyclic, then it is hereditarily hypercyclic.
\end{THE}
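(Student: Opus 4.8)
The plan is to reduce the assertion to a single statement about the dynamics of $\varphi$: that the sequence of iterates $(\COMPPHI{n})_n$ is compactly divergent in $\OO(\Omega,\Omega)$. Granting this, the theorem follows quickly. Indeed, since $C_\varphi$ is hypercyclic, Proposition \ref{prop_kon_u} gives that $\varphi$ is injective and that $\varphi(\Omega)$ is a Runge domain w.r.t. $\Omega$; adding compact divergence of $(\COMPPHI{n})_n$, we see that $\varphi$ together with the sequence $(n_l)_l := (l)_l$ satisfies conditions \ref{pkon_hu_1}, \ref{pkon_hu_2}, \ref{pkon_hu_3}. As $\Omega$ is a hypercyclic domain, Proposition \ref{prop_c_h_suff} says that \ref{pkon_hu_1}, \ref{pkon_hu_2}, \ref{pkon_hu_3} are sufficient for hereditary hypercyclicity in $\Omega$; hence $C_\varphi$ is hereditarily hypercyclic w.r.t. $(l)_l$, i.e. hereditarily hypercyclic.

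It remains to prove that $(\COMPPHI{n})_n$ is compactly divergent, and here I argue by contradiction using tautness. Suppose $(\COMPPHI{n})_n$ is not compactly divergent. Since $\Omega$ is taut, Theorem \ref{th_abate_rel_comp} implies that $(\COMPPHI{n})_n$ is relatively compact in $\OO(\Omega,\Omega)$. On the other hand, hypercyclicity of $C_\varphi$ yields, via Proposition \ref{prop_kon_u}, that $(\COMPPHI{n})_n$ is run-away, hence it has a compactly divergent subsequence $(\COMPPHI{m_j})_j$; by relative compactness we may pass to a further subsequence with $\COMPPHI{m_j}\to\psi$ for some $\psi\in\OO(\Omega,\Omega)$ --- in particular $\psi(\Omega)\subset\Omega$. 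Now fix $z_0\in\Omega$, choose a closed ball $L$ around $\psi(z_0)$ with $L\subset\Omega$, and set $K:=L\cup\lbrace z_0\rbrace$, a compact subset of $\Omega$. Compact divergence of $(\COMPPHI{m_j})_j$ gives $\COMPPHI{m_j}(K)\cap L=\varnothing$ for all large $j$; but $\COMPPHI{m_j}(z_0)\to\psi(z_0)\in\INT L$ and $z_0\in K$, so $\COMPPHI{m_j}(z_0)\in\COMPPHI{m_j}(K)\cap L$ for large $j$ --- a contradiction. Hence $(\COMPPHI{n})_n$ is compactly divergent.

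The argument has no deep obstacle once Abate's theorem and the run-away necessary condition are available; the single subtle point, which is also the heart of the proof, is that the limit map $\psi$ furnished by Theorem \ref{th_abate_rel_comp} sends $\Omega$ into $\Omega$ (not merely into $\overline{\Omega}$), and this is precisely what is incompatible with the existence of a compactly divergent subsequence. One should also keep in mind that ``hereditarily hypercyclic'' in the statement is hereditary hypercyclicity w.r.t. $(l)_l$, so that Proposition \ref{prop_c_h_suff} applies directly.
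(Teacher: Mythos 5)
Your proposal is correct and follows essentially the same route as the paper: use hypercyclicity to get injectivity, the Runge condition and the run-away property, then combine tautness with Abate's Theorem \ref{th_abate_rel_comp} to upgrade run-away to compact divergence of $(\COMPPHI{n})_n$, and conclude via the hypercyclicity of the domain (conditions \ref{pkon_hu_1}--\ref{pkon_hu_3} with $n_l=l$). The only difference is that you spell out, via the ball argument around $\psi(z_0)$, why a compactly divergent subsequence is incompatible with relative compactness in $\OO(\Omega,\Omega)$ — a detail the paper states without proof.
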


\noindent
Note that tautness does not imply that a manifold belongs to $\mathcal{S}$, and vice versa.
The examples are simple: $\DD_*$ is taut, but not in $\mathcal{S}$, while $\CC$ is in $\mathcal{S}$, but it is not taut (see Theorem \ref{th_char_w_c}).

\begin{proof}
Hypercyclicity of $C_\varphi$ implies that $\varphi$ satisfies the conditions \ref{pkon_u_1}, \ref{pkon_u_2}, \ref{pkon_u_3} with $n_l=l$.
The last condition implies that $(\COMPPHI{n})_n$ cannot be relatively compact in $\OO(\Omega,\Omega)$, because it has a compactly divergent subsequence.
Therefore by Theorem \ref{th_abate_rel_comp} the sequence $(\COMPPHI{n})_n$ is compactly divergent, so $\varphi$ satisfies the conditions \ref{pkon_hu_1}, \ref{pkon_hu_2}, \ref{pkon_hu_3} with $n_l=l$.
Then, as $\Omega\in\mathcal{S}$, the operator $C_\varphi$ must be hereditarily hypercyclic.
\end{proof}

Combining Theorems \ref{th_carat_conv} and \ref{th_taut_and_conv_impl_hu} we get:

\begin{THE}\label{th_carat_hu}
Let $\Omega$ be a connected Stein manifold for which there exists a point $z_0\in\Omega$ so that $$\lim_{\Omega\ni z\to\UZWIO}c_{\Omega}(z,z_0)=\infty.$$
Then $\Omega$ is taut and it belongs to $\mathcal{S}$.

In particular, if $\varphi\in\OO(\Omega,\Omega)$ is such that $C_\varphi$ if hypercyclic, then $C_\varphi$ is hereditarily hypercyclic.
\end{THE}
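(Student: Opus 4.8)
The plan is to handle the statement in three steps: hypercyclicity, tautness, and the concluding consequence. Hypercyclicity of $\Omega$ is literally Theorem~\ref{th_carat_conv}, so nothing new is needed there. The genuine content is that $\Omega$ is taut; once this is established, the final ``in particular'' assertion is obtained by applying Theorem~\ref{th_taut_and_conv_impl_hu} to the hypercyclic taut domain $\Omega$.

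To prove tautness I would argue by hand, using only the Schwarz lemma, the Montel theorem and the hypothesis. Fix the point $z_0$ of the hypothesis. By the maximum principle $c_\Omega^*(z_0,w)<1$ for every $w\in\Omega$, so $c_\Omega(z_0,\cdot)$ is finite on $\Omega$ and the relatively compact balls $B(s):=\{z\in\Omega:c_\Omega(z_0,z)<s\}$ increase to $\Omega$ as $s\to\infty$; in particular every compact subset of $\Omega$ is contained in some $B(s)$. Moreover, for any $f\in\OO(\DD,\Omega)$ and $|z|\le r<1$ one has $c_\Omega(f(0),f(z))\le p(r):=\log\tfrac{1+r}{1-r}$: given $G\in\OO(\Omega,\DD)$ with $G(f(0))=0$, the map $G\circ f:\DD\to\DD$ vanishes at $0$, hence $|G(f(z))|\le|z|\le r$ by the Schwarz lemma, and taking the supremum over such $G$ yields the estimate.

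Now take an arbitrary sequence $(f_n)_n\subset\OO(\DD,\Omega)$ and look at the numbers $c_\Omega(z_0,f_n(0))$. If they have a bounded subsequence, say $c_\Omega(z_0,f_n(0))\le R$ along it, then the triangle inequality gives $c_\Omega(z_0,f_n(z))\le R+p(r)$ for $|z|\le r$, so $f_n(\overline{\DD_r})$ lies in the fixed compact set $\overline{B(R+p(r)+1)}\subset\Omega$ for every $r<1$; hence $(f_n)$ is locally bounded, and by Montel a further subsequence converges locally uniformly to a holomorphic $f:\DD\to\CC^N$, for which $f(\overline{\DD_r})\subset\overline{B(R+p(r)+1)}\subset\Omega$, i.e.\ $f\in\OO(\DD,\Omega)$. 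If no bounded subsequence exists, then $c_\Omega(z_0,f_n(0))\to\infty$, and the reverse triangle inequality gives $c_\Omega(z_0,f_n(z))\ge c_\Omega(z_0,f_n(0))-p(r)\to\infty$ uniformly on each $\overline{\DD_r}$; as the balls $B(s)$ exhaust $\Omega$, this is exactly compact divergence of $(f_n)$. Either way the defining property of a taut domain holds.

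Finally, $\Omega$ being both taut and hypercyclic, Theorem~\ref{th_taut_and_conv_impl_hu} yields that hypercyclicity of $C_\varphi$ implies hereditary hypercyclicity, which completes the proof. (Alternatively, one could simply note that the hypothesis says $\Omega$ is $c$-finitely compact, which is known to imply tautness; cf.\ \cite{jarnickipflug}. The self-contained argument above seems preferable.) The only delicate point is the tautness proof, and within it the one place where the hypothesis is genuinely used: forcing the Montel limit to take its values in $\Omega$ rather than merely in $\overline{\Omega}$ --- which is precisely what relative compactness of the Carath\'{e}odory balls provides.
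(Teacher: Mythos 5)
Your proposal is correct, and its overall skeleton coincides with the paper's: hypercyclicity of $\Omega$ is quoted from Theorem \ref{th_carat_conv}, the real work is tautness, and the final claim follows by applying Theorem \ref{th_taut_and_conv_impl_hu}. Where you differ is in how tautness is proved. You give the classical ``$c$-finitely compact implies taut'' argument: the Schwarz lemma yields the contraction estimate $c_\Omega(f(0),f(z))\le\log\frac{1+r}{1-r}$ for $f\in\OO(\DD,\Omega)$, and a dichotomy on $c_\Omega(z_0,f_n(0))$ (bounded subsequence versus divergence to $\infty$) combined with the triangle inequality and Montel settles both alternatives, the hypothesis entering only to keep the Montel limit inside $\Omega$. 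The paper instead negates compact divergence to obtain compacts $K\subset\DD$, $M\subset\Omega$ with $f_n(K)\cap M\neq\varnothing$, shows by contradiction that $\bigcup_n f_n(L)$ is relatively compact in $\Omega$ for every compact $L\subset\DD$ --- using Carath\'eodory--M\"obius extremal functions $F_k$, Montel, and the maximum principle to force the limits $H\equiv 1$ on $\DD$ and $G(\Omega)\subset\DD$, then comparing $\min_K|F_k\circ f_{n_k}|$ with $\max_M|F_k|$ --- and only then applies Montel to the $f_n$ themselves. Your route is more quantitative and arguably cleaner (it isolates exactly where relative compactness of the Carath\'eodory balls is used, and the compact-divergence half needs no normal-families argument at all), while the paper's avoids invoking the triangle inequality and holomorphic contractivity of $c_\Omega$ explicitly, working directly with extremal functions. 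Two very minor elisions on your side, neither a gap: the claim $c_\Omega^*(z_0,w)<1$ really uses Montel together with the maximum principle (a maximizing sequence has a limit which would otherwise be a unimodular constant vanishing at $w$), and the statement that every compact $M\subset\Omega$ lies in some ball $B(s)$ uses (local) boundedness of $c_\Omega(z_0,\cdot)$ on compacts, which again follows from the contraction property; both are standard and in the spirit of the detail level you adopt.
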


Example \ref{ex_conv_hu_noncarat} shows that there are taut planar domains which belong to $\mathcal{S}$, but do not satisfy the assumptions of Theorem \ref{th_carat_hu}.

Note that for all the domains listed in the paragraph below Theorem \ref{th_carat_conv}, hypercyclicity of $C_\varphi$ implies its hereditary hypercyclicity.
For domains in $\CC$ having this property, see Theorem \ref{th_char_w_c}.

\begin{proof}[Proof of Theorem \ref{th_carat_hu}]
In virtue of Theorems \ref{th_carat_conv} and \ref{th_taut_and_conv_impl_hu} it suffices to prove that $\Omega$ is taut.
Actually, this fact is probably known, but we could not find it in the literature in required form.
The proof is similar to the proof of \cite[Lemma 2.3.18]{abate}; we shall sketch it briefly.
Let $\rho$ denote the Poincar\'{e} distance in $\DD$, and $\DIAM_{c_\Omega}$, $\DIAM_{\rho}$ the diameters of sets with respect to $c_\Omega$, $\rho$, respectively.
Assume that $\Omega$ is a submanifold of $\CC^{2N+1}$ and fix a sequence $(f_n)_n\subset\OO(\DD,\Omega)$.
Suppose that it is not compactly divergent.
Passing to a subsequence if necessary, we may assume that there are compact sets $K_0\subset\DD$, $K_1\subset\Omega$ such that $f_n(K_0)\cap K_1\neq\varnothing$ for every $n$.

We are going to show that for every compact set $K\subset\DD$ the sum $\bigcup_n f_n(K)$ lies in some Carath\'{e}odory ball of $\Omega$.
Fix a point $w_1\in K_1$ and take $K$; we may assume that $K_0\subset K$.
Since $c_\Omega(f_n(z),f_n(w))\leq\rho(z,w)$ for $z,w\in K$, we see that $\DIAM_{c_\Omega}f_n(K)\leq\DIAM_{\rho}K$.
Hence, as $f_n(K)\cap K_1$ is non-empty, for $z\in K$ we have $$c_\Omega(f_n(z),w_1)\leq \DIAM_{c_\Omega}f_n(K)+\DIAM_{c_\Omega}K_1\leq \DIAM_{\rho}K+\DIAM_{c_\Omega}K_1:=R_K.$$
This implies that each $f_n(K)$ lies in the closed $c_\Omega$-ball with center at $w_1$ and radius $R_K$.

Since any $c_\Omega$-ball is relatively compact in the topology of $\Omega$, there is $\bigcup_n f_n(K)\subset\subset\Omega$ for each compact set $K\subset\DD$.
In view of the Montel theorem, $(f_n)_n$ admits a convergent subsequence and its limit belongs to $\OO(\DD,\Omega)$.	
\end{proof}

\begin{PROBLEM}
Do there exist a connected Stein manifold $\Omega$ and a holomorphic map $\varphi:\Omega\to\Omega$ such that $C_\varphi$ is hypercyclic but non-hereditarily hypercyclic?
\end{PROBLEM}

It was shown by Ansari (\cite[Theorem 1]{ansari1}, \cite[Note 3]{ansari2}; see also \cite[Theorem 8]{grosseerdmann_families}) that if a continuous linear operator $T$ on a locally convex space is hypercyclic, then for every natural number $n$ the operator $\COMP{T}{n}$ is hypercyclic, and $T$ and $\COMP{T}{n}$ have the same set of hypercyclic vectors.
This gives that if $T$ is hypercyclic, then it is hypercyclic w.r.t. every increasing arithmetic sequence of natural numbers.
Using a similar reasoning, one can show that if $T$ is hereditarily hypercyclic w.r.t. some increasing arithmetic sequence of natural numbers, then it is hereditarily hypercyclic.
Applying this conclusions to $C_\varphi$, we can state the following proposition:

\begin{PROP}\label{prop_arithm}
Let $\Omega$ be a connected Stein manifold and $\varphi\in\OO(\Omega,\Omega)$.
\begin{enumerate}
\renewcommand{\theenumi}{(\arabic{enumi})}
\renewcommand{\labelenumi}{\theenumi}
\item\label{prop_arithm_u} If $C_\varphi$ is hypercyclic, then it is hypercyclic with respect to each increasing arithmetic sequence of natural numbers.
\item\label{prop_arithm_hu} If $C_\varphi$ is hereditarily hypercyclic with respect to some increasing arithmetic sequence of natural numbers, then it is hereditarily hypercyclic.
\end{enumerate}
\end{PROP}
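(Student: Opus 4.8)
The plan is to reduce both parts to Theorem~\ref{th_char} by means of one combinatorial observation. For an $\Omega$-convex compact set $K\subset\Omega$ I abbreviate $S(K):=\{n\in\NN:K\cap\COMPPHI{n}(K)=\varnothing\text{ and }K\cup\COMPPHI{n}(K)\text{ is }\Omega\text{-convex}\}$; then Theorem~\ref{th_char} reads: for an injective $\varphi$, $C_\varphi$ is hypercyclic with respect to an increasing sequence $(m_l)_l$ iff $\{m_l:l\}\cap S(K)\neq\varnothing$ for every such $K$, and hereditarily so iff moreover $\{m_l:l\}\setminus S(K)$ is finite for every such $K$. Let $(n_l)_l$ be an increasing arithmetic sequence, $n_l=n_1+(l-1)d$ with common difference $d\geq1$. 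Since any $d$ consecutive integers meet the residue class of $n_1$ modulo $d$, it will suffice to produce, for each $\Omega$-convex compact $K$, an arbitrarily large $p$ with $\{p,p+1,\dots,p+d-1\}\subseteq S(K)$.

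The core step is the following sublemma: \emph{if $\varphi$ is injective with $\varphi(\Omega)$ a Runge domain w.r.t. $\Omega$, if $K\subset\Omega$ is $\Omega$-convex compact, if $\widetilde K:=\HULLZO{\bigcup_{t=0}^{d-1}\COMPPHI{t}(K)}$ (a compact $\Omega$-convex set, see \cite{hormander}), and if $p\in S(\widetilde K)$, then $p+t\in S(K)$ for every $t\in\{0,\dots,d-1\}$.} To prove it I would note that $\widetilde K$ contains $\COMPPHI{t}(K)$ for $0\le t\le d-1$, and that, since $p\in S(\widetilde K)$, the sets $\widetilde K$ and $\COMPPHI{p}(\widetilde K)$ are disjoint $\Omega$-convex compacts (the latter is $\Omega$-convex by Remark~\ref{rem_runge}) with $\Omega$-convex union, so Lemma~\ref{lemat_otoczki_1} provides $F\in\OO(\Omega)$ with $\HULLP{F(\widetilde K)}\cap\HULLP{F(\COMPPHI{p}(\widetilde K))}=\varnothing$. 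For $t<d$ one has $\COMPPHI{t}(K)\subset\widetilde K$, hence $\COMPPHI{p+t}(K)=\COMPPHI{p}\big(\COMPPHI{t}(K)\big)\subset\COMPPHI{p}(\widetilde K)$, while $K\subset\widetilde K$; therefore $K\cap\COMPPHI{p+t}(K)=\varnothing$ and, by monotonicity of $\HULLP{\cdot}$, also $\HULLP{F(K)}\cap\HULLP{F(\COMPPHI{p+t}(K))}=\varnothing$. Since $K$ and $\COMPPHI{p+t}(K)$ are disjoint $\Omega$-convex compacts, Lemma~\ref{lemat_otoczki_1} then gives that $K\cup\COMPPHI{p+t}(K)$ is $\Omega$-convex, i.e. $p+t\in S(K)$.

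Granting the sublemma, part~\ref{prop_arithm_u} would run as follows. By Proposition~\ref{prop_kon_u}, $\varphi$ is injective and $\varphi(\Omega)$ is Runge. By Observation~\ref{obs_hu_subseq}, $C_\varphi$ is hereditarily hypercyclic with respect to some increasing sequence, so by the hereditary part of Theorem~\ref{th_char} the set $S(\widetilde K)$ contains a tail of that sequence, hence is infinite, for every $\Omega$-convex compact $\widetilde K$. Now fix an $\Omega$-convex compact $K$, form $\widetilde K$ as in the sublemma, and choose $p\in S(\widetilde K)$ with $p\geq n_1$: then $\{p,\dots,p+d-1\}\subseteq S(K)$, and this block contains a term $n_l$ of $(n_l)_l$, so $\{n_l:l\}\cap S(K)\neq\varnothing$ and Theorem~\ref{th_char} gives hypercyclicity w.r.t. $(n_l)_l$. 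For part~\ref{prop_arithm_hu}: by Proposition~\ref{prop_kon_hu}, $\varphi$ is injective and $\varphi(\Omega)$ is Runge, and by the hereditary part of Theorem~\ref{th_char} applied to $(n_l)_l$, every $\Omega$-convex compact $\widetilde K$ admits an $l_0$ with $n_l\in S(\widetilde K)$ for all $l\geq l_0$. Fix an $\Omega$-convex compact $K$, form $\widetilde K$, take such an $l_0$, and for each integer $m\geq n_{l_0}$ write $m=n_l+t$ with $t\in\{0,\dots,d-1\}$ and the unique $l\geq l_0$ satisfying $n_l\leq m<n_l+d$; the sublemma (with $p=n_l$) gives $m\in S(K)$. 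Hence $S(K)$ contains all integers $\geq n_{l_0}$, so $\NN\setminus S(K)$ is finite, and Theorem~\ref{th_char} applied to the sequence $(l)_l$ yields hereditary hypercyclicity of $C_\varphi$.

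The only genuine work is the sublemma, and inside it the point to handle carefully is that a subset of an $\Omega$-convex set need not be $\Omega$-convex; this is exactly why the $\Omega$-convexity of $K\cup\COMPPHI{p+t}(K)$ has to be obtained from the separating function $F$ of Lemma~\ref{lemat_otoczki_1}, not from hulls directly. The other thing to get right is the construction of $\widetilde K$: it must absorb every iterate $\COMPPHI{t}(K)$ with $t<d$, because this is what turns one large exponent $p\in S(\widetilde K)$ into a full block $p,\dots,p+d-1$ of admissible exponents for $K$ --- and a block of $d$ consecutive integers cannot miss an arithmetic progression of step $d$. Everything else is elementary bookkeeping with iterates and residues modulo $d$.
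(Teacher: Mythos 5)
Your proof is correct, but it takes a genuinely different route from the paper. You stay entirely inside the hull characterization of Theorem~\ref{th_char}: the new ingredient is your sublemma that one admissible exponent $p$ for the enlarged compact $\widetilde K=\HULLZO{K\cup\COMPPHI{1}(K)\cup\ldots\cup\COMPPHI{d-1}(K)}$ yields the full block $p,p+1,\ldots,p+d-1$ of admissible exponents for $K$, with the $\Omega$-convexity of $K\cup\COMPPHI{p+t}(K)$ recovered from the separating function of Lemma~\ref{lemat_otoczki_1} (correctly so, since subsets of $\Omega$-convex sets need not be $\Omega$-convex), and with Remark~\ref{rem_runge} supplying the $\Omega$-convexity of the iterated images; Observation~\ref{obs_hu_subseq} is then exactly what you need in part~(1), because plain hypercyclicity only gives $S(\widetilde K)\neq\varnothing$, while you need an element $p\geq n_1$. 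The paper argues instead directly with universal functions: for part~(1) it invokes the Ansari-type result \cite[Theorem 8]{grosseerdmann2} to get universality of $(\COMPCPHI{l\mu})_l$ and then shifts by the residue $\nu$ by approximating $h\circ\COMPPHI{-\nu}$ on the Runge domain $\COMPPHI{\nu}(\Omega)$; part~(2) is done by the same shifting trick applied to each residue class. Your approach buys self-containedness (no external theorem on powers of universal operators; only Theorem~\ref{th_char}, Lemma~\ref{lemat_otoczki_1}, Remark~\ref{rem_runge}, Propositions~\ref{prop_kon_u}--\ref{prop_kon_hu} and Observation~\ref{obs_hu_subseq}) and can be read as a geometric counterpart of the Ansari-type theorem; the paper's proof is shorter and, via \cite[Theorem 8]{grosseerdmann2}, records in passing that $(\COMPCPHI{l})_l$ and $(\COMPCPHI{l\mu})_l$ share the same universal elements, information your argument does not track.
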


\subsection{The case of planar domain}\label{sect_one_dim}

For a domain $\Omega\subset\CC$, the following theorem (see \cite[Theorem 3.19]{grosseerdmann_mortini}; below we present it in a bit different form) describes all sequences $(\varphi_l)_{l\in\NN}\subset\OO(\Omega,\Omega)$ of injective maps for which the sequence $(C_{\varphi_l})_l$ of composition operators is universal:

\begin{THE}\label{th_char_w_c00}
Let $(\varphi_l)_l$ be a sequence of injective holomorphic self-maps of a domain $\Omega\subset\CC$.
Then:
\begin{enumerate}
\renewcommand{\theenumi}{(\arabic{enumi})}
\renewcommand{\labelenumi}{\theenumi}
\item\label{th_char_w_c_simple00} If $\Omega$ is simple connected, then $(C_{\varphi_l})_l$ is universal if and only if $(\varphi_l)_l$ is run-away.
\item\label{th_char_w_c_finitely00} If $\Omega$ is finitely connected but not simple connected, then $(C_{\varphi_l})_l$ is never universal.
\item\label{th_char_w_c_infinitely00} If $\Omega$ is infinitely connected, then $(C_{\varphi_l})_l$ is universal if and only if for every compact $\Omega$-convex subset $K\subset\Omega$ and for every $l_0$ there is $l\geq l_0$ such that $\varphi_l(K)$ is $\Omega$-convex and $\varphi_l(K)\cap K=\varnothing$.
\end{enumerate}
\end{THE}

Although Grosse-Erdmann and Mortini defined $\Omega$-convexity in different way (they said that a compact subset $K\subset\Omega$ of a domain $\Omega\subset\CC$ is $\Omega$-convex if every hole of $K$ contains a point of $\CC\setminus\Omega$), our definition agrees with their in dimension one.
Here by a hole of $K$ we mean a bounded connected component of $\CC\setminus K$.
Equivalence of both definitions follows from \cite[Theorems 1.3.1 and 1.3.3]{hormander}.

As a corollary from our considerations and from the above theorem applied to mappings $\varphi_l:=\COMPPHI{n_l}$ we obtain:

\begin{THE}\label{th_char_w_c}
Let $\Omega\subset\CC$ be a simply connected or an infinitely connected domain.
Then:
\begin{enumerate}
\renewcommand{\theenumi}{(\arabic{enumi})}
\renewcommand{\labelenumi}{\theenumi}
\item\label{th_char_w_c_supp} $\Omega$ belongs to $\mathcal{S}$.
\item\label{th_char_w_c_hu} If for a mapping $\varphi\in\OO(\Omega,\Omega)$ the operator $C_\varphi$ is hypercyclic, then it is hereditarily hypercyclic.
\end{enumerate}
\end{THE}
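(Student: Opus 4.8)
The plan is to read off both parts from the one-dimensional characterization of Theorem~\ref{th_char_w_c00}, applied to the sequence of iterates $\varphi_l:=\COMPPHI{n_l}$, combined with the tautness results of Section~\ref{sect_hu}.

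For part~\ref{th_char_w_c_supp} I fix $\varphi\in\OO(\Omega,\Omega)$ and an increasing sequence $(n_l)_l$ satisfying \ref{pkon_u_1}, \ref{pkon_u_2}, \ref{pkon_u_3}, and set $\varphi_l:=\COMPPHI{n_l}$; since $\varphi$ is injective, each $\varphi_l$ is an injective holomorphic self-map of $\Omega$. If $\Omega$ is simply connected, condition \ref{pkon_u_3} says precisely that $(\varphi_l)_l$ is run-away, so Theorem~\ref{th_char_w_c00}\ref{th_char_w_c_simply00} gives that $(\COMPCPHI{n_l})_l$ is universal, i.e. $C_\varphi$ is hypercyclic w.r.t. $(n_l)_l$. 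If $\Omega$ is infinitely connected, I verify the criterion of Theorem~\ref{th_char_w_c00}\ref{th_char_w_c_infinitely00}: given an $\Omega$-convex compact $K\subset\Omega$ and an index $l_0$, the set $\COMPPHI{n_l}(K)$ is $\Omega$-convex for every $l$ by \ref{pkon_u_2} and Remark~\ref{rem_runge}, so it remains to find a single $l\geq l_0$ with $\COMPPHI{n_l}(K)\cap K=\varnothing$. I obtain this by applying the run-away property of $(\varphi_l)_l$ to the compact set $\widetilde{K}:=K\cup\bigcup_{j=1}^{l_0-1}\COMPPHI{n_j}(K)$: for some $l$ one has $\COMPPHI{n_l}(\widetilde{K})\cap\widetilde{K}=\varnothing$, hence $\COMPPHI{n_l}(K)\cap K=\varnothing$, and $l<l_0$ is impossible, since it would give $\varnothing\neq\COMPPHI{n_l}(K)\subseteq\COMPPHI{n_l}(\widetilde{K})\cap\widetilde{K}$. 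So Theorem~\ref{th_char_w_c00}\ref{th_char_w_c_infinitely00} applies, $C_\varphi$ is hypercyclic w.r.t. $(n_l)_l$, and $\Omega$ is hypercyclic.

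For part~\ref{th_char_w_c_hu} assume $C_\varphi$ is hypercyclic. If $\Omega\neq\CC$, then $\Omega$ is taut: a simply connected $\Omega\subsetneq\CC$ is biholomorphic to $\DD$ by the Riemann mapping theorem and tautness is a biholomorphic invariant, while an infinitely connected planar domain has $\CC\setminus\Omega$ infinite, hence is hyperbolic and therefore taut (for planar domains hyperbolicity and tautness coincide). Since $\Omega$ is hypercyclic by part~\ref{th_char_w_c_supp}, Theorem~\ref{th_taut_and_conv_impl_hu} gives that $C_\varphi$ is hereditarily hypercyclic.

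It remains to treat $\Omega=\CC$. Every injective entire map is affine, $\varphi(z)=az+b$ with $a\neq 0$. By Proposition~\ref{prop_kon_u} hypercyclicity of $C_\varphi$ forces $(\COMPPHI{n})_n$ to be run-away; but for $a\neq 1$ the map $\varphi$ has a fixed point, contradicting run-away, so $a=1$, and then $b\neq 0$ is forced as well. Thus $\COMPPHI{n}(z)=z+nb$, the sequence $(\COMPPHI{n})_n$ is compactly divergent, and each of its subsequences $(\COMPPHI{n_l})_l$ is run-away; since in addition $\varphi$ is injective and $\varphi(\CC)=\CC$ is trivially Runge w.r.t. $\CC$, every such subsequence satisfies \ref{pkon_u_1}, \ref{pkon_u_2}, \ref{pkon_u_3}, so by part~\ref{th_char_w_c_supp} the operator $C_\varphi$ is hypercyclic w.r.t. every increasing sequence, i.e. hereditarily hypercyclic. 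The only points needing care are the passage from ``run-away'' to ``infinitely many admissible indices'' in the infinitely connected case of part~\ref{th_char_w_c_supp}, and the identification of $\CC$ as the unique non-taut domain in the scope of part~\ref{th_char_w_c_hu}; everything else is a direct appeal to the quoted results.
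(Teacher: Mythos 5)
Your proposal is correct, and its skeleton coincides with the paper's: part \ref{th_char_w_c_supp} is read off from Theorem \ref{th_char_w_c00} applied to $\varphi_l=\COMPPHI{n_l}$, and part \ref{th_char_w_c_hu} is handled for $\Omega\neq\CC$ by tautness plus Theorem \ref{th_taut_and_conv_impl_hu} (the paper simply quotes that a planar domain is taut iff its complement has at least two points; your Riemann-mapping/hyperbolicity justification is an equivalent route). You in fact supply details the paper suppresses as immediate: the observation that $\COMPPHI{n_l}(K)$ is $\Omega$-convex by \ref{pkon_u_2} and Remark \ref{rem_runge}, and especially the enlargement $\widetilde K:=K\cup\bigcup_{j=1}^{l_0-1}\COMPPHI{n_j}(K)$, which cleanly upgrades the run-away property (one good index per compact set) to the ``for every $l_0$ there is $l\ge l_0$'' form demanded in the infinitely connected case of Theorem \ref{th_char_w_c00}. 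The one genuine divergence is the case $\Omega=\CC$ in part \ref{th_char_w_c_hu}: the paper shows via Picard's theorem that $\varphi$ is an automorphism of $\CC$, hence affine, and then invokes \cite[Theorem 3.1]{bernalgonzalez} for the equivalence of hypercyclicity and hereditary hypercyclicity of affine symbols, whereas you argue directly that hypercyclicity forces $\varphi(z)=z+b$ with $b\neq 0$ (a fixed point for $a\neq 1$ would contradict the run-away property from Proposition \ref{prop_kon_u}), so the iterates are compactly divergent, every increasing sequence satisfies \ref{pkon_u_1}--\ref{pkon_u_3}, and hereditary hypercyclicity follows from part \ref{th_char_w_c_supp}. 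Your version is more self-contained, eliminating the external citation at the modest cost of the classical fact that injective entire functions are affine -- which is essentially what the paper's Picard argument establishes anyway.
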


\begin{proof}
Part \ref{th_char_w_c_supp} follows directly from Theorem \ref{th_char_w_c00}.
We prove \ref{th_char_w_c_hu}.
It is known that a domain $\Omega\subset\CC$ is taut if and only if the set $\CC\setminus\Omega$ has at least two points (see \cite[Remark 3.2.3 (d)]{jarnickipflug}), so if $\Omega\neq\CC$, then $\Omega$ is taut and Theorem \ref{th_taut_and_conv_impl_hu} does the job.

It remains to consider the case $\Omega=\CC$.
Fix $\varphi$ for which $C_\varphi$ is hypercyclic.
By the Picard theorem the set $\CC\setminus\varphi(\CC)$ contains at most one point.
But since $\varphi$ is a homeomorphism on its image, there must be $\varphi(\CC)=\CC$ and so $\varphi$ is an automorphism of $\CC$.
Therefore $\varphi$ is an affine endomorphism.
Now it  suffices to use \cite[Theorem 3.1]{bernalgonzalez}: it says (in particular) that for $\varphi$ being an affine endomorphism of $\CC^N$, the composition operator $C_\varphi:\OO(\CC^N)\to\OO(\CC^N)$ is hypercyclic if and only if it is hereditarily hypercyclic.
\end{proof}

\begin{EX}\label{ex_conv_hu_noncarat}
Let $\Omega_0\subset\CC$ be an infinitely connected domain and let $a\in\Omega_0$.
Define $\Omega:=\Omega_0\setminus\lbrace a\rbrace$.
Then by Theorem \ref{th_char_w_c} the domain $\Omega$ satisfies the conclusion of Theorem \ref{th_carat_hu}, but it does not satisfy the assumption that $\lim_{\Omega\ni z\to\UZWIO}c_{\Omega}(z,z_0)=\infty$.
This follows from the classical Riemann extension theorem: there is $c_\Omega=c_{\Omega_0}$ on $\Omega\times\Omega$ and $c_\Omega(z_0,z)\to c_{\Omega_0}(z_0,a)<\infty$ as $z\to a$, for any $z_0$.
\end{EX}


%
%
%
%
%
%
%
%
%
%
%
%
%
%
%
%


\end{document}